	\tikzstyle{edge}=[line width=.75pt]
	\tikzstyle{fnode}=[fill=black,draw=black,circle,scale=\s]
	\tikzstyle{pathnode}=[inner sep=.9pt]
	\tikzstyle{wordnode}=[fill=white,scale=.8,inner sep=.5pt]
\newtheorem{theorem}{Theorem}[section]
\newtheorem{proposition}[theorem]{Proposition}
\newtheorem{conjecture}[theorem]{Conjecture}
\newtheorem{lemma}[theorem]{Lemma}
\newtheorem{corollary}[theorem]{Corollary}
\theoremstyle{remark}
\newtheorem{definition}[theorem]{Definition}
\newtheorem{example}[theorem]{Example}
\newtheorem{remark}[theorem]{Remark}
\newcommand{\defn}[1]{{\color{green!50!black}\emph{#1}}}
\newcommand{\defs}{\stackrel{\mathsf{def}}{=}}
\newcommand{\ie}{\text{i.e.,}\;}
\renewcommand{\th}{^{\mathsf{th}}}
\newcommand{\two}{\mathbf{2}}
\newcommand{\Poset}{\mathbf{P}}
\newcommand{\JI}{\mathsf{J}}
\newcommand{\MI}{\mathsf{M}}
\newcommand{\Words}{\mathsf{W}}
\newcommand{\WordPoset}{\mathbf{W}}
\newcommand{\comporder}{\leq_{\mathsf{comp}}}
\newcommand{\compless}{<_{\mathsf{comp}}}
\newcommand{\compcover}{\lessdot_{\mathsf{comp}}}
\newcommand{\ab}{\mathfrak{a}}
\newcommand{\bb}{\mathfrak{b}}
\newcommand{\jb}{\mathfrak{j}}
\newcommand{\ob}{\mathfrak{o}}
\newcommand{\ub}{\mathfrak{u}}
\newcommand{\vb}{\mathfrak{v}}
\newcommand{\wb}{\mathfrak{w}}
\newcommand{\Supp}{\mathsf{Supp}}
\newcommand{\indeg}{\mathsf{in}}
\newcommand{\Can}{\mathsf{CJ}}
\newcommand{\htriangle}{\mathsf{H}}
\renewcommand{\min}{\mathsf{min}}
\renewcommand{\max}{\mathsf{max}}
\newcommand{\tops}{\mathsf{top}}
\newcommand{\pos}{\mathsf{atom}}
\let\orgdescriptionlabel\descriptionlabel
\renewcommand*{\descriptionlabel}[1]{%
  \let\orglabel\label
  \let\label\@gobble
  \phantomsection
  \protected@edef\@currentlabel{#1\unskip}%
  \let\label\orglabel
  \orgdescriptionlabel{(#1)}%
}
\title{Combinatorics of $(m,n)$-Word Lattices}
\author{Henri M{\"u}hle}
\address{Qoniac GmbH, Dr.-K{\"u}lz-Ring 15, 01067 Dresden, Germany}
\email{henri.muehle@proton.me}
\keywords{Hochschild lattice, $(m,n)$-word lattice, Galois graph, canonical join representation, $H$-triangle}
\subjclass[2020]{06D75, 05E99}
\begin{document}

\begin{abstract}
    We study the $(m,n)$-word lattices recently introduced by V.~Pilaud and D.~Poliakova in their study of generalized Hochschild polytopes. We prove that these lattices are extremal and constructable by interval doublings. Moreover, we describe further combinatorial properties of these lattices, such as their cardinality, their canonical join representations and their Galois graphs. 
\end{abstract}

\maketitle

\section{Introduction}

The Hochschild polytope was introduced in algebraic topology~\cite{rivera18combinatorial,saneblidze09bitwisted,saneblidze11homology} and has recently gained quite some interest due to its wealth of combinatorial properties. Its $1$-skeleton can be realized in terms of a particular order on Dyck paths~\cite{chapoton20some} and as the componentwise order of certain integer tuples~\cite{combe21geometric}. Intriguingly, the resulting poset, dubbed the \defn{Hochschild lattice}, is in fact a trim and interval-constructable lattice~\cite{combe21geometric,muehle22hochschild}. 

In \cite{muehle22hochschild}, a connection between the Hochschild lattice and a particular shuffle lattice from \cite{greene88posets} was established which nicely parallels the connection between the Tamari lattice and the lattice of noncrossing partitions through the so-called \defn{core label order}~\cite{reading11noncrossing}. Question~7.1 in \cite{muehle22hochschild} asks for a two-parameter generalization of the Hochschild lattice that extends the mentioned connection between Hochschild and shuffle lattices. One attempt in answering this question was made by the author together with T.~McConville by introducing \defn{bubble lattices}, which are an order extension of the shuffle lattices~\cite{mcconville22bubbleI,mcconville22bubbleII}. 

Another intriguing generalization of the Hochschild lattice was recently presented in \cite{pilaud23hochschild} from a geometric point of view. In that article, the story of Hochschild polytopes and Hochschild lattices was beautifully spun further, by introducing a new family of polytopes, the \defn{$(m,n)$-Hochschild polytopes} that arise as \defn{shadows} of \defn{$(m,n)$-multiplihedra}. We refer the reader to the excellent article~\cite{pilaud23hochschild} for the whole story. 

It turns out that the $1$-skeleton of the $(m,n)$-Hochschild polytope can be oriented in such a way that one obtains a lattice; the \defn{$m$-lighted $n$-shade (right) rotation lattice}, which simultaneously generalizes the boolean lattice (for $m=0$) and the original Hochschild lattice (for $m=1$). There is some computational evidence that this lattice is constructable by interval doublings, but in contrast to the Hochschild lattice, it is in general not extremal anymore~\cite[Remark~29]{pilaud23hochschild}.

However, the $m$-lighted $n$-shade (right) rotation lattice contains a natural quotient lattice, the \defn{$(m,n)$-word lattice}, which also generalizes the Hochschild lattice and seems to retain all its beautiful lattice-theoretic properties. The main purpose of this article is to confirm this suspicion and provide some lattice-theoretic and combinatorial properties of these $(m,n)$-word lattices.

\begin{theorem}\label{thm:main_theorem}
    For integers $m,n\geq 0$, the $(m,n)$-word lattice is extremal and constructable by interval doublings. 
\end{theorem}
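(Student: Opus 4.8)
The plan is to work throughout with the explicit combinatorial model of the $(m,n)$-word lattice --- its elements described as certain words (the ``$(m,n)$-words'') equipped with the combinatorial cover relation --- and to deduce the two assertions from a careful description of the join-irreducible and meet-irreducible elements. In a word lattice of this shape a join-irreducible should be a word covering a unique word, hence one obtained from the minimum $\hat 0$ by ``switching on'' a single letter subject to the defining constraints, and dually a meet-irreducible is obtained from $\hat 1$ by switching one letter off. So the first step is to enumerate $\JI$ and $\MI$ precisely, to determine the length $\ell$ of the lattice (the size of a longest chain), and to check $|\JI| = |\MI| = \ell$; the degenerate cases $m=0$ (the boolean lattice) and $n=0$ should serve as sanity checks for the counts.

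For extremality in the sense of Markowsky it is then not enough to have these three numbers agree: one must exhibit a single maximal chain $\hat 0 = w_0 \lessdot w_1 \lessdot \cdots \lessdot w_\ell = \hat 1$ together with labelings $j_1,\dots,j_\ell$ of $\JI$ and $m_1,\dots,m_\ell$ of $\MI$ such that, along this chain, each cover $w_{i-1}\lessdot w_i$ is ``responsible'' for exactly one join-irreducible and one meet-irreducible, i.e.\ $w_i = w_{i-1}\vee j_i$ and $w_{i-1} = w_i\wedge m_i$. I would choose the maximal chain that switches letters on in a canonical reading order (left to right, or ordered by ``shade'' and then position) and verify this matching directly from the cover-relation description. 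Getting this bookkeeping straight --- and separately proving that no maximal chain is longer than $\ell$ (which amounts to identifying the length, and is cleanest if the lattice turns out to be graded, though one only needs the length) --- is where most of the combinatorial labor lies.

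For constructability by interval doublings I would invoke the known correspondence between extremal lattices and their Galois graphs: once extremality is in hand, the chosen maximal chain induces a directed graph $G$ on $\{1,\dots,\ell\}$ recording which $j_i$ are forced above which $j_k$, and an extremal lattice is constructable by interval doublings precisely when its Galois graph is acyclic. I expect that, because letters only ever get switched on in a way compatible with a fixed linear reading order on positions, the arcs of $G$ all point ``forward'' and acyclicity is immediate; so the task reduces to computing $G$ explicitly from the model and reading off that it has no directed cycle. Should the Galois-graph computation prove awkward, the fallback is a direct inductive argument: show that the $(m,n)$-word lattice arises from the $(m,n-1)$-word lattice (or the $(m-1,n)$-word lattice) by Day's doubling of one explicitly identified interval, with the boolean lattice as base case --- more hands-on, since one must check the doubling reproduces the cover relations, but it avoids the extremal-lattice machinery.

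The step I expect to be the main obstacle is the simultaneous bookkeeping required for extremality: pairing up the $\ell$ join-irreducibles, the $\ell$ meet-irreducibles, and the $\ell$ cover steps of one fixed maximal chain, while at the same time ruling out longer maximal chains. Once that dictionary between positions of the word and irreducible elements is established, both the length count and the acyclicity of the Galois graph --- hence the interval-doubling constructability --- should follow from the monotone manner in which letters are turned on.
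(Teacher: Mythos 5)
There are several genuine gaps here, and the overall logical architecture is inverted relative to what actually works. First, you over-engineer extremality: Markowsky's definition is exactly the equality $\lvert\JI(\Poset)\rvert=\ell(\Poset)=\lvert\MI(\Poset)\rvert$, so no pairing of cover steps with irreducibles along a distinguished maximal chain is required. Second, the step you flag as ``where most of the combinatorial labor lies'' --- ruling out chains longer than $(m+1)n-1$ --- is in fact free: in \emph{any} finite lattice $\ell(\Poset)\leq\lvert\JI(\Poset)\rvert$, since along a maximal chain each cover forces a new join-irreducible below the upper element but not the lower one. You do not identify this, and without it your plan has no mechanism for the upper bound on $\ell$. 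The paper's route is: prove interval-constructability first; deduce semidistributivity (Day), hence $\lvert\JI\rvert=\lvert\MI\rvert$ with no need to enumerate $\MI$ at all; count $\lvert\JI\rvert=(m+1)n-1$ from the explicit classification of join-irreducibles; exhibit a chain of that length (which falls out of the doubling construction); and close with $\ell\leq\lvert\JI\rvert$.

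The constructability argument is the more serious problem. Your primary route rests on the claim that an extremal lattice is constructable by interval doublings precisely when its Galois graph is acyclic; this equivalence is not established (and the arrow description of the Galois graph used in this setting, Lemma~\ref{lem:galois_characterization}, is only valid for lattices \emph{already known} to be extremal and interval-constructable, so the argument is circular as stated). Your fallback --- a single Day doubling taking $\WordPoset(m,n-1)$ to $\WordPoset(m,n)$ --- fails on cardinality grounds: a single doubling at most doubles the number of elements, but $\lvert\Words(m,n)\rvert>2\lvert\Words(m,n-1)\rvert$ whenever $m\geq 1$ (e.g.\ $\lvert\Words(2,2)\rvert=9$ while $\lvert\Words(2,3)\rvert=25$). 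The correct statement requires $m+1$ successive doublings per increment of $n$: first double by the whole lattice to get $\WordPoset(m,n-1)\times\two$ (appending the letters $0$ and $m+1$), then double by the nested intervals $\Words^{(j)}(m,n-1)$ for $j=m,m-1,\ldots,1$ (appending the letter $j$). Identifying these intervals, and verifying that each doubling lands inside $\Words(m,n)$ and that together they exhaust it, is the actual content of the proof; your proposal does not locate them.
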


The (as of yet) undefined lattice-theoretic concepts will be introduced in Section~\ref{sec:lattice_theory}. In Section~\ref{sec:mn_words}, we formally introduce the $(m,n)$-word lattices, and prove Theorem~\ref{thm:main_theorem} in Section~\ref{sec:mn_lattice_properties}. Finally, in Section~\ref{sec:mn_combinatorics} we investigate further combinatorial properties of $\WordPoset(m,n)$, such as its cardinality, its canonical join representations and its Galois graph.

\section{Lattice-theoretic Background}
    \label{sec:lattice_theory}

\subsection{Basics}

A \defn{poset} (short for \emph{partially ordered set}) is a finite set (the \defn{ground set}) equipped with a reflexive, antisymmetric and transitive relation (the \defn{order relation}). Usually, we denote the ground set by $P$, the order relation by $\leq$, and denote the corresponding poset $(P,\leq)$ by $\Poset$. The \defn{dual} poset of $\Poset$ is obtained by reversing the relation, \ie if $\Poset=(P,\leq)$, then its dual is $(P,\geq)$. 

A classical example of a poset is a \defn{chain}, \ie a poset where for any two elements $p,q\in P$ it holds that $p\leq q$ or $q\leq p$. If the ground set of a chain has $n$ elements, then we call it an \defn{$n$-chain}, and such a chain is isomorphic to $\mathbf{n}\defs\bigl([n],\leq\bigr)$, where $[n]=\{1,2,\ldots,n\}$.

The \defn{length} of $\Poset$, denoted by $\ell(\Poset)$, is one less than the maximum cardinality of a chain contained in $\Poset$. 

If $\Poset_1=(P_1,\leq_1)$ and $\Poset_2=(P_2,\leq_2)$ are two posets, then their \defn{direct product} is the poset $\Poset_1 \times \Poset_2 \defs (P_1\times P_2,\leq)$, where $(p_1,p_2) \leq (q_1,q_2)$ if and only if $p_1 \leq_1 q_1$ and $p_2 \leq_2 q_2$.

A \defn{lattice} is a poset in which every two elements $p,q\in P$ have a \defn{join} (\ie a unique minimal element that is above both $p$ and $q$) and a \defn{meet} (\ie a unique maximal element that is below both $p$ and $q$). In such a case we write $p\vee q$ for the join of $p$ and $q$ and $p\wedge q$ for the meet of $p$ and $q$.

\subsection{Interval-constructable Lattices}

Let $X\subseteq P$. The \defn{ideal} generated by $X$ is the set
\begin{displaymath}
    P_{\leq X} \defs \{p\in P\colon p\leq x\;\text{for some}\;x\in X\}.
\end{displaymath}

An \defn{interval} is a set $X\subseteq P$ with the property that there exist two elements $p,q\in P$ such that $X=\{x\colon p\leq x\leq q\}$. In this case, we usually write $[p,q]$ instead of $X$. The \defn{doubling} of $\Poset$ by $X$ is the subposet of $\Poset\times\two$ induced by the ground set
\begin{displaymath}
    P[X] \defs \Bigl(P_{\leq X} \times \{1\}\Bigr) \cup \Bigl(\bigl((P\setminus P_{\leq X})\cup X\bigr) \times \{2\}\Bigr).
\end{displaymath}

\begin{proposition}[\cite{day92doubling}]
    If $\Poset=(P,\leq)$ is a lattice and $X\subseteq P$ is an interval, then the doubling of $\Poset$ by $X$ is a lattice.
\end{proposition}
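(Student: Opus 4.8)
The plan is to reduce everything to meets via a standard fact: a \emph{finite} poset that possesses a greatest element and in which every two elements have a meet is automatically a lattice, since the join of $x$ and $y$ is then the meet of the (finite, nonempty) set of common upper bounds of $x$ and $y$. So it suffices to locate the top of $\Poset[X]$ and to compute pairwise meets in $\Poset[X]$.

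First I would unwind the definition of the ground set. Writing $X=[a,b]$, one has $P_{\leq X}=P_{\leq b}$, so an element of $\Poset[X]$ has exactly one of the forms $(p,1)$ with $p\leq b$, or $(p,2)$ with $p\in X$, or $(p,2)$ with $p\not\leq b$; the order is the restriction of the componentwise order on $\Poset\times\two$. In particular, $(p,1)\in\Poset[X]$ forces $p\leq b$, and $(p,2)\in\Poset[X]$ forces $p\in X$ or $p\not\leq b$. Writing $\hat 1$ for the top of $\Poset$, the pair $(\hat 1,2)$ always lies in $\Poset[X]$ --- either $\hat 1\not\leq b$, or else $\hat 1=b\in X$ --- and it lies above every element of $\Poset[X]$, so it is the greatest element.

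Now take $(p,i),(q,j)\in\Poset[X]$ and set $r\defs p\wedge q$, the meet in $\Poset$. I would split into cases according to $\min(i,j)$. If $\min(i,j)=1$, then one of $p,q$ is $\leq b$, hence $r\leq b$, so $(r,1)\in\Poset[X]$; since any common lower bound sits at level $1$ and below $r$, the pair $(r,1)$ is the meet. If $i=j=2$ and moreover $(r,2)\in\Poset[X]$, then $(r,2)$ is the componentwise meet and lies in $\Poset[X]$, so it is the meet. The only remaining case is $i=j=2$ with $r\leq b$ but $r\notin X$; here I claim the meet is $(r,1)$. It is a common lower bound, and if $(s,k)\in\Poset[X]$ is any common lower bound, then $s\leq r\leq b$, and $k=2$ would force $(s,2)\in\Poset[X]$ with $s\leq b$, hence $s\in X$, that is, $a\leq s\leq r$, contradicting $r\notin X=[a,b]$; so $k=1$ and $(s,1)\leq(r,1)$.

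Combining the cases, every pair in $\Poset[X]$ has a meet and $\Poset[X]$ has a top, hence it is a lattice. The crux is the last subcase: one must not expect meets to be computed componentwise in $\Poset\times\two$, because the natural candidate $(p\wedge q,2)$ may fall outside $\Poset[X]$; the point is then that nothing in $\Poset[X]$ is squeezed strictly above level $1$ and below that missing element, and this is exactly where the hypothesis that $X$ is an interval (order-convexity) is used, via ``$a\leq s\leq r$ implies $a\leq r$''. A quick check with a non-convex $X$ shows the meet really can fail there, so this is not a step one can shortcut.
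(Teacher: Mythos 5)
Your proof is correct. The paper offers no proof of this proposition at all --- it is simply cited from Day's 1992 article --- so there is nothing to compare against; your argument (exhibit the top element $(\hat 1,2)$, construct pairwise meets by the three-way case split on whether $(p\wedge q,2)$ survives in $P[X]$, and invoke the standard fact that a finite poset with a greatest element and all pairwise meets is a lattice) is sound, and you correctly isolate the one place where order-convexity of $X$ is genuinely needed.
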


A lattice is \defn{interval constructable} if it can be obtained from the $2$-chain $\two$ by a sequence of interval doublings.

\subsection{Semidistributive lattices}

From now on, let $\Poset=(P,\leq)$ be a finite lattice. An element $j\in P$ is \defn{join irreducible} if it cannot be written as the join of two distinct elements. In other words, whenever $j=p\vee q$ for $p,q\in P$, then either $p=j$ or $q=j$. We denote the set of all join-irreducible elements of $\Poset$ by $\JI(\Poset)$. 

A \defn{join representation} of $p\in P$ is a set $A\subseteq P$ such that $p=\bigvee A$. If the set $\{P_{\leq A}\colon A\;\text{is a join representation of}\;p\}$ has a unique minimal element with respect to inclusion, then the join representation corresponding to this minimal element is the \defn{canonical join representation} of $p$, denoted by $\Can(p)$. 

A lattice is \defn{join semidistributive} if and only if every element has a canonical join representation~\cite[Theorem~2.24]{freese95free}. 

By duality, we can also define meet-irreducible elements, (canonical) meet representations and meet-semidistributive lattices. The set of meet-irreducible elements of $\Poset$ is denoted by $\MI(\Poset)$.

A lattice is \defn{semidistributive} if it is both join- and meet-semidistributive. 

\begin{lemma}[{\cite[Corollary~2.55]{freese95free}}]\label{lem:semidistributive_irreducibles}
    If $\Poset$ is semidistributive, then $\bigl\lvert\JI(\Poset)\bigr\rvert=\bigl\lvert\MI(\Poset)\bigr\rvert$.
\end{lemma}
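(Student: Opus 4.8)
Since this statement is quoted from the literature, in the paper itself I would simply cite it; but the argument I would reconstruct proceeds by exhibiting an explicit bijection $\kappa\colon\JI(\Poset)\to\MI(\Poset)$. The plan is to build $\kappa$ from meet-semidistributivity, build its candidate inverse $\lambda\colon\MI(\Poset)\to\JI(\Poset)$ dually from join-semidistributivity, and then check that the two maps are mutually inverse.

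First I would define $\kappa$. Fix $j\in\JI(\Poset)$ and let $j_*$ be the unique element covered by $j$. Consider $U_j\defs\{x\in P\colon x\geq j_*\text{ and }x\not\geq j\}$, which is nonempty because $j_*\in U_j$. For any $x\in U_j$ one has $x\wedge j=j_*$: indeed $x\wedge j\geq j_*$, while $x\wedge j\neq j$ forces $x\wedge j\leq j_*$. Hence if $x,y\in U_j$ then $x\wedge j=y\wedge j=j_*$, and meet-semidistributivity gives $(x\vee y)\wedge j=j_*\neq j$, so $x\vee y\in U_j$. Thus $U_j$ is closed under joins and has a greatest element, which I call $\kappa(j)$. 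I would then verify that $\kappa(j)$ is meet-irreducible: any $z\gtrdot\kappa(j)$ satisfies $z\geq j_*$ but $z\notin U_j$, hence $z\geq j$, hence $z\geq\kappa(j)\vee j$; since $\kappa(j)\vee j>\kappa(j)$ (as $j\not\leq\kappa(j)$), this shows $\kappa(j)\vee j$ is the unique element covering $\kappa(j)$. Dually, using join-semidistributivity, for $m\in\MI(\Poset)$ with unique upper cover $m^*$ the set $D_m\defs\{x\in P\colon x\leq m^*\text{ and }x\not\leq m\}$ is closed under meets, has a least element $\lambda(m)$, and $\lambda(m)$ is join-irreducible with unique lower cover $\lambda(m)\wedge m$.

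The final step is to show $\lambda\circ\kappa=\mathrm{id}$ and $\kappa\circ\lambda=\mathrm{id}$. Given $j\in\JI(\Poset)$, the element $j$ lies in $D_{\kappa(j)}$, because $j\leq\kappa(j)\vee j=(\kappa(j))^*$ and $j\not\leq\kappa(j)$; hence $\lambda(\kappa(j))\leq j$. If the inequality were strict, then $\lambda(\kappa(j))\leq j_*\leq\kappa(j)$, contradicting $\lambda(\kappa(j))\in D_{\kappa(j)}$; so $\lambda(\kappa(j))=j$, and the symmetric computation gives $\kappa(\lambda(m))=m$. Therefore $\kappa$ is a bijection and $\bigl\lvert\JI(\Poset)\bigr\rvert=\bigl\lvert\MI(\Poset)\bigr\rvert$. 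I do not expect a single hard step here; the one place that genuinely needs care is checking that $\kappa(j)$ and $\lambda(m)$ are truly irreducible rather than merely extremal in $U_j$ and $D_m$, which is precisely where the uniqueness of the relevant covers has to be nailed down, and the bookkeeping of simultaneously invoking both semidistributivity laws.
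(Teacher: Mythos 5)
Your reconstruction is correct and is essentially the standard argument from the cited source (Freese--Je\v{z}ek--Nation's $\kappa$-map): the paper itself offers no proof and simply cites Corollary~2.55 of \emph{Free Lattices}, and your bijection $\kappa\colon\JI(\Poset)\to\MI(\Poset)$ with inverse $\lambda$ is exactly how that result is established there. The only step you pass over quickly --- that $x\wedge j\neq j$ forces $x\wedge j\leq j_*$ --- is indeed fine, since in a finite lattice every element strictly below a join-irreducible $j$ lies below its unique lower cover, so nothing is missing.
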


\begin{theorem}[{\cite[Lemma~4.2]{day79characterizations}}]\label{thm:semidistributive_interval_constructible}
    Every interval-constructable lattice is semidistributive.
\end{theorem}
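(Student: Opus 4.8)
The plan is to induct on the number of interval doublings needed to build the lattice from $\two$. The base case is the $2$-chain $\two$, which is distributive and hence semidistributive, so everything reduces to one claim: if $\Poset$ is a finite semidistributive lattice and $I=[a,b]$ is an interval of $\Poset$, then the doubling $\Poset[I]$ is again semidistributive.

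To prove this claim I would work with the projection $\pi\colon\Poset[I]\to\Poset$, $(x,i)\mapsto x$. Reading off the definition of the ground set $P[I]$, one first records explicit descriptions of joins and meets in $\Poset[I]$: the join of two elements of $\Poset[I]$ equals their join in $\Poset\times\two$ (a short check shows this always lands back in $P[I]$), and their meet equals their meet in $\Poset\times\two$ except that the second coordinate is lowered from $2$ to $1$ when that meet has the form $(y,2)$ with $y\leq b$ and $y\notin I$. In particular $\pi$ is a surjective lattice homomorphism, and each fibre of $\pi$ is a singleton, with the sole exception of the fibres over elements of $I$, which are two-element chains $(x,1)\lessdot(x,2)$.

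Granting these descriptions, meet-semidistributivity of $\Poset[I]$ --- in its equational form $u\wedge v=u\wedge w\Rightarrow u\wedge(v\vee w)=u\wedge v$ --- follows. Assume $u\wedge v=u\wedge w$ and write $\bar x=\pi(x)$. Applying the homomorphism $\pi$ gives $\bar u\wedge\bar v=\bar u\wedge\bar w$, so meet-semidistributivity of $\Poset$ yields $\bar u\wedge(\bar v\vee\bar w)=\bar u\wedge\bar v$. Hence $u\wedge(v\vee w)$ lies in the same fibre of $\pi$ as $u\wedge v$, and since $v\leq v\vee w$ we also have $u\wedge v\leq u\wedge(v\vee w)$. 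If that fibre is a singleton, or if $u\wedge v$ is the larger element of a two-element fibre, then $u\wedge(v\vee w)=u\wedge v$ and we are done. The only remaining possibility is $u\wedge v=u\wedge w=(z,1)$ and $u\wedge(v\vee w)=(z,2)$ with $z\in I$, which I would rule out directly: $(z,2)\leq u$ forces $u=(\bar u,2)$; then $u\wedge v=(z,1)$ forces, by the meet description, $v=(\bar v,1)$ with $\bar v\leq b$, and similarly $w=(\bar w,1)$ with $\bar w\leq b$; therefore $\bar v\vee\bar w\leq b$, so $v\vee w=(\bar v\vee\bar w,1)$, and the meet description gives $u\wedge(v\vee w)=\bigl(\bar u\wedge(\bar v\vee\bar w),1\bigr)=(z,1)$, contradicting $u\wedge(v\vee w)=(z,2)$. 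The argument for join-semidistributivity is symmetric, interchanging $\vee$ with $\wedge$, the two copies of $I$, and $a$ with $b$. Thus $\Poset[I]$ is semidistributive and the induction closes.

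The step I expect to be most delicate is pinning down the join and meet descriptions in $\Poset[I]$: verifying that the join computed in $\Poset\times\two$ never escapes $P[I]$, and determining exactly when the $\Poset\times\two$-meet needs its second coordinate corrected. Once these are in hand, the transfer along $\pi$ and the concluding case analysis are both short. This essentially recovers the cited lemma of Day; the same bookkeeping shows, more precisely, that $\pi$ is a bounded lattice homomorphism.
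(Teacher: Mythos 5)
The paper offers no proof of this statement: it is imported verbatim from Day's 1979 article (Lemma~4.2 there), so there is nothing in-paper to compare against. Your sketch is a correct, self-contained reconstruction of the standard argument. The two structural claims you flag as delicate do check out for the definition of doubling used here (with $X=[a,b]$, so $P_{\leq X}=\{p\colon p\leq b\}$): the componentwise join of two elements of $P[X]$ always lies back in $P[X]$, and the componentwise meet escapes $P[X]$ exactly when both arguments have second coordinate $2$ and the first coordinate of the meet is $\leq b$ but not $\geq a$, in which case the meet in the doubled lattice is obtained by dropping the second coordinate to $1$. Granting these, $\pi$ is a surjective lattice homomorphism with the stated fibres, and your case analysis for the implication $u\wedge v=u\wedge w\Rightarrow u\wedge(v\vee w)=u\wedge v$ is complete. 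One remark: you work with the equational form of semidistributivity, whereas the paper defines it via canonical join/meet representations; for finite lattices these are equivalent, and that equivalence is precisely the cited Theorem~2.24 of Freese--Je\v{z}ek--Nation, so nothing is lost.

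The one step I would not let pass as written is the closing ``the argument for join-semidistributivity is symmetric.'' The doubling as defined in this paper is built from the ideal $P_{\leq X}$ and is \emph{not} literally self-dual, so the $\vee$-half is not obtained by mechanically swapping symbols. It does go through by an analogous (but not mirror-image) argument: in the residual case $u\vee v=(z,2)$ and $u\vee(v\wedge w)=(z,1)$ with $z\in X$, one gets $u=(\bar u,1)$ and $v=(\bar v,2)$, $w=(\bar w,2)$; since $\bar v\leq\bar u\vee\bar v=z\leq b$, membership of $(\bar v,2)$ in $P[X]$ forces $\bar v\in X$, likewise $\bar w\in X$, hence $\bar v\wedge\bar w\geq a$, so no second-coordinate correction occurs in $v\wedge w$ and $u\vee(v\wedge w)$ has second coordinate $2$ --- a contradiction. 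Alternatively, first observe that collapsing the singleton fibres identifies $\Poset[X]$ with the manifestly self-dual presentation $(P\setminus X)\uplus(X\times\two)$, after which a genuine duality argument is legitimate. Either repair is short, but one of them is needed in a full write-up.
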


\subsection{Extremal lattices}

Following \cite{markowsky92primes}, the lattice $\Poset$ is \defn{extremal} if 
\begin{displaymath}
    \bigl\lvert\JI(\Poset)\bigr\rvert=\ell(\Poset)=\bigl\lvert\MI(\Poset)\bigr\rvert. 
\end{displaymath}

An element $x\in P$ is \defn{left modular} if $(p\vee x)\wedge q = p\vee(x\wedge q)$ holds for all $p,q\in P$ with $p<q$. If $\Poset$ is extremal and contains an chain of length $\ell(\Poset)$ consisting entirely of left-modular elements, then $\Poset$ is \defn{trim}~\cite{thomas07analogue}.  Surprisingly, the next result states that when a lattice is extremal and semidistributive it must necessarily be trim.

\begin{theorem}[{\cite[Theorem~1.4]{thomas19rowmotion}}]\label{thm:semidistributive_extremal_trim}
    Every extremal, semidistributive lattice is trim.
\end{theorem}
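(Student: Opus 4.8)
The plan is to verify the definition of \textbf{trim} directly. Since $\Poset$ is extremal it carries a maximal chain $C\colon c_0\lessdot c_1\lessdot\cdots\lessdot c_\ell$ of length $\ell=\ell(\Poset)$, whose first and last elements are the least and greatest elements of $\Poset$; by the definition recalled above it then suffices to prove that every $c_i$ is left modular. I would run this through the following pentagon criterion: an element $x$ of a finite lattice is left modular if and only if there is no covering pair $a\lessdot b$ with $a\wedge x=b\wedge x$ and $a\vee x=b\vee x$. Such a pair produces a sublattice $\{a\wedge x,\,a,\,b,\,x,\,a\vee x\}\cong N_5$ with $x$ on its short side; conversely, if left modularity fails at some $p<q$, that is $p\vee(x\wedge q)<(p\vee x)\wedge q$, then it already fails at $a=p\vee(x\wedge q)$ and $b=(p\vee x)\wedge q$ (one checks $a\wedge x=b\wedge x$ and $a\vee x=b\vee x$ from $a\leq q$ and $b\geq p$), and passing to a cover inside a maximal chain from $a$ to $b$ preserves both equalities. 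So I would assume for contradiction that some spine element $c_i$ admits a witnessing cover $a\lessdot b$.

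To that witnessing cover I would attach labels coming from the two hypotheses on $\Poset$. From extremality, Markowsky's theory of extremal lattices provides, for each $k\in\{1,\ldots,\ell\}$, a join-irreducible $\jb_k$ — the least element with $\jb_k\leq c_k$ but $\jb_k\not\leq c_{k-1}$ — and dually a meet-irreducible $\mathfrak{m}_k$ — the greatest element with $c_{k-1}\leq\mathfrak{m}_k$ but $c_k\not\leq\mathfrak{m}_k$; extremality is exactly what makes $k\mapsto\jb_k$ and $k\mapsto\mathfrak{m}_k$ bijections onto $\JI(\Poset)$ and $\MI(\Poset)$, with $\jb_k\vee c_{k-1}=c_k$ and $\mathfrak{m}_k\wedge c_k=c_{k-1}$. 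From semidistributivity I would use the $\kappa$-map: for $j\in\JI(\Poset)$ with unique lower cover $j_*$, let $\kappa(j)$ be the greatest $m\in\Poset$ with $j_*\leq m$ and $j\not\leq m$; it is well defined since $\Poset$ is meet semidistributive, lies in $\MI(\Poset)$, and is a bijection since $\Poset$ is also join semidistributive. I would also invoke the standard fact that every cover $x\lessdot y$ of a semidistributive lattice has a unique join-irreducible label $\lambda(x,y)$ — the unique $j\in\JI(\Poset)$ with $j\leq y$, $j\not\leq x$ and $j_*\leq x$ — and a unique meet-irreducible label, with $\kappa$ carrying the former to the latter.

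Granting these tools, here is the contradiction I expect to extract. Put $u=a\wedge c_i=b\wedge c_i$, $v=a\vee c_i=b\vee c_i$, and $j=\lambda(a,b)$. If $j\leq c_i$ then $j\leq b\wedge c_i=u\leq a$, contradicting $j\not\leq a$; hence $j\not\leq c_i$, and letting $k$ be the unique index with $j=\jb_k$, the relations $\jb_k\leq c_k$ and $\jb_k\not\leq c_i$ force $k>i$. The point of the argument is to obtain the reverse inequality $k\leq i$, equivalently $j\leq c_i$: intuitively the second equality $a\vee c_i=b\vee c_i$ should force the same join-irreducible to \emph{enter} the spine $C$ no later than step $i$. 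To make this precise I would dualize the analysis on the meet side of the cover $a\lessdot b$, whose meet-irreducible label is $\kappa(j)$ and which satisfies $\kappa(j)\geq u$ and $\kappa(j)\not\geq b$, and then compare $\kappa(j)$ with the chain $C$ using $\mathfrak{m}_k\wedge c_k=c_{k-1}$ together with the compatibility $\kappa(\jb_k)=\mathfrak{m}_k$ between Markowsky's two spine bijections. That comparison should yield $k\leq i$, contradicting $k>i$; hence no spine element fails to be left modular and $\Poset$ is trim.

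The step I expect to be the genuine obstacle is exactly the compatibility $\kappa(\jb_k)=\mathfrak{m}_k$ for all $k$ — that Markowsky's join-irreducible and meet-irreducible spine bijections are intertwined by the semidistributive $\kappa$-map — together with the bookkeeping that the labels of the witnessing cover $a\lessdot b$ are positioned consistently with respect to $C$ on the meet side and the join side simultaneously. The local pentagon itself gives essentially no leverage, since $N_5$ is semidistributive; all the force has to come from the global chain $C$. Once that compatibility is established the remaining index chase is routine, and the proof closes as above.
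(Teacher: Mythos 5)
The paper does not actually prove this statement: it is imported verbatim from Thomas--Williams \cite{thomas19rowmotion}*{Theorem~1.4}, so there is no internal proof to compare against, and your attempt has to be judged on its own. On those terms it is essentially a correct proof, and its skeleton (reduce left modularity to a witnessing cover $a\lessdot b$, compare the join-irreducible attached to that cover with the Markowsky spine bijections, derive $i<k\leq i$) is close in spirit to the argument in the cited source. The pentagon reduction is sound; the bijections $k\mapsto\jb_k$ and $k\mapsto\mathfrak{m}_k$ exist because the $\ell$ sets $J_k=\{j\in\JI(\Poset)\colon j\leq c_k,\ j\not\leq c_{k-1}\}$ are nonempty and pairwise disjoint while $\lvert\JI(\Poset)\rvert=\ell$ (and dually for $M_k$); and the final index chase is valid once $\kappa(\jb_k)=\mathfrak{m}_k$ is known. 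One nitpick: $\jb_k$ should be described as the \emph{unique join-irreducible} with $\jb_k\leq c_k$ and $\jb_k\not\leq c_{k-1}$, not as the least element of that set (uniqueness is exactly what the counting argument delivers).

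The step you flag as the genuine obstacle is in fact the cheapest part of the argument, and you already have every ingredient needed to close it. Since $J_k=\{\jb_k\}$ is a singleton, every join-irreducible strictly below $\jb_k$ lies below $c_{k-1}$, hence $(\jb_k)_*\leq c_{k-1}$; thus $c_{k-1}$ belongs to the set $\{z\colon z\wedge\jb_k=(\jb_k)_*\}$ whose maximum is $\kappa(\jb_k)$, giving $\kappa(\jb_k)\geq c_{k-1}$, while $\kappa(\jb_k)\not\geq\jb_k$ forces $\kappa(\jb_k)\not\geq c_k$. So $\kappa(\jb_k)$ is a meet-irreducible lying in the singleton $M_k=\{\mathfrak{m}_k\}$, which is the desired compatibility --- no separate theory of meet-irreducible cover labels is required. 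The same two observations applied to the witnessing cover (take $j$ minimal among join-irreducibles below $b$ and not below $a$, so that $j_*\leq a$ and hence $a\leq\kappa(j)$, $\kappa(j)\not\geq b$) give you $\kappa(j)\not\geq c_i$ and hence $k\leq i$, completing the contradiction. With those few lines inserted, your proposal is a complete and correct proof of the theorem.
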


\section{Basics on $(m,n)$-Words}
    \label{sec:mn_words}

Let us start right away with the central definition of this article. 

\begin{definition}[{\cite[Definition~75]{pilaud23hochschild}}]
	Let $m,n\geq 0$. An \defn{$(m,n)$-word} is a word $w_{1}w_{2}\ldots w_{n}$ of length $n$ on the alphabet $\{0,1,\ldots,m{+}1\}$ such that
	\begin{description}
		\item[MN1\label{it:mn_1}] $w_{1}\neq m+1$,
		\item[MN2\label{it:mn_2}] for $1\leq s\leq m$, $w_{i}=s$ implies $w_{j}\geq s$ for all $j<i$.
	\end{description}
\end{definition}

As a convention, we denote $(m,n)$-words by a latin letter in fraktur font, e.g., $\wb$, and then denote its $i\th$ member by the same letter in regular font with subscript $i$, e.g., $w_i$. More precisely, when we deal with an $(m,n)$-word $\wb$, then we will occasionally access its letters through the variable $w_i$ without explicitly stating something along the lines of ``Let $\wb=w_1w_2\ldots w_n$.''.

Let $\Words(m,n)$ denote the set of $(m,n)$-words.  For two words $\ub=u_{1}u_{2}\ldots u_{n}$ and $\vb=v_{1}v_{2}\ldots v_{n}$ we write $\ub\comporder\vb$ if and only if $u_{i}\leq v_{i}$ for all $i\in[n]$. The \defn{$(m,n)$-word poset} is the poset $\WordPoset(m,n)\defs\bigl(\Words(m,n),\comporder\bigr)$.  It is quickly verified that $\WordPoset(m,n)$ has a unique minimal element, namely $\ob\defs 00\ldots0$, and a unique maximal element $m(m+1)(m+1)\ldots(m+1)$. 

\begin{theorem}[{\cite{pilaud23hochschild}*{Corollary~82}}]\label{thm:mn_lattice}
	For $m,n\geq 0$, the poset $\WordPoset(m,n)$ is a lattice.
\end{theorem}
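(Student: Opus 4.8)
The plan is to establish that $\WordPoset(m,n)$ is a lattice by exhibiting explicit formulas for meets (and then dualizing, or directly, for joins). Since the order $\comporder$ is componentwise, the natural first guess for the meet of $\ub$ and $\vb$ is the componentwise minimum $\zb$ with $z_i \defs \min(u_i,v_i)$; the analogous guess for the join is the componentwise maximum. The issue is that these need not be $(m,n)$-words: taking minima can break condition \ref{it:mn_2} (if $z_i = s$ because, say, $u_i = s < v_i$, we may have $z_j = \min(u_j,v_j)$ dropping below $s$ for some $j<i$ even though $u_j \geq s$), and taking maxima can break \ref{it:mn_1} (the first letter could become $m+1$) or again \ref{it:mn_2}. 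So the real content is a ``repair'' procedure that corrects the naive candidate to the genuinely largest $(m,n)$-word below both $\ub$ and $\vb$ (respectively smallest above both).

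The key steps, in order, are as follows. First I would record the elementary closure properties of $\Words(m,n)$ under the relevant operations — in particular, understand exactly how \ref{it:mn_1} and \ref{it:mn_2} interact with componentwise $\min$ and $\max$. Second, for the meet, I would start from $z_i = \min(u_i,v_i)$ and define a corrected word $\hat\zb$ by a left-to-right (or right-to-left) sweep: whenever \ref{it:mn_2} is violated at position $i$ for some value $s$, one is forced to lower the offending earlier letters; concretely, for each $s \in [m]$ one should replace any letter that is ``too small to legally precede an $s$ further right'' — but here one must be careful, since lowering a letter can only help \ref{it:mn_2} for smaller values, so a single downward pass suffices. One shows $\hat\zb \in \Words(m,n)$, that $\hat\zb \comporder \ub$ and $\hat\zb \comporder \vb$ (each correction only decreases letters), and — the crucial point — that $\hat\zb$ is the \emph{greatest} such word: any $(m,n)$-word $\wb$ with $\wb \comporder \ub,\vb$ satisfies $\wb \comporder \zb$ componentwise, and since $\wb$ itself obeys \ref{it:mn_2}, one checks position by position that $w_i \leq \hat z_i$. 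Third, having a meet for every pair and a greatest element $m(m{+}1)\cdots(m{+}1)$, the join exists automatically by the standard fact that a finite meet-semilattice with a top element is a lattice; alternatively one runs the dual repair argument starting from componentwise $\max$ (repairing \ref{it:mn_1} by capping the first letter, and repairing \ref{it:mn_2}, which for maxima can only be violated in a controlled way). Either route closes the proof. Of course, this is exactly \cite{pilaud23hochschild}*{Corollary~82}, so one may simply cite it; the sketch above is how I would reprove it from scratch.

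The main obstacle is the optimality (greatest-lower-bound) verification for the repaired word, and making the repair procedure genuinely canonical: one must argue that the order in which violations of \ref{it:mn_2} are fixed does not matter, i.e.\ that there is a well-defined largest $(m,n)$-word dominated by a given arbitrary word $\zb$. The clean way to see this is to note that $\Words(m,n)$ is closed under componentwise $\min$ of \emph{two $(m,n)$-words} far less obviously than one hopes, so instead one shows directly that for any word $\zb$ (not necessarily an $(m,n)$-word), the set $\{\wb \in \Words(m,n) : \wb \comporder \zb\}$ is nonempty (it contains $\ob$) and closed under pairwise componentwise $\max$ — this last closure is the technical heart, using that \ref{it:mn_2} is preserved under $\max$ of two words each satisfying it (if $w_i = s$ or $w'_i = s$ forces the respective earlier letters to be $\geq s$, their max is $\geq s$), and \ref{it:mn_1} is preserved because neither first letter is $m+1$ and they are $\comporder\zb$. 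Hence that set has a greatest element, which is the desired meet. Dualizing handles joins, or one invokes the top-element argument; this yields Theorem~\ref{thm:mn_lattice}.
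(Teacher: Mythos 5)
Your argument is correct, but note that the paper does not actually prove this statement: it is imported verbatim as \cite{pilaud23hochschild}*{Corollary~82}, so there is no internal proof to match against. What you propose is a legitimate self-contained replacement, and its essential content --- that $\Words(m,n)$ is closed under componentwise maximum because \ref{it:mn_1} is preserved (both first letters are $\leq m$) and \ref{it:mn_2} is preserved (if $\max\{u_i,v_i\}=s\in[m]$ is attained by, say, $u_i$, then $u_j\geq s$ for $j<i$ forces $\max\{u_j,v_j\}\geq s$) --- is exactly the unnamed lemma the paper proves later, in the subsection on canonical join representations, where it identifies joins in $\WordPoset(m,n)$ with componentwise maxima. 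From there your cleanest route is the one in your final paragraph: joins exist and $\ob$ is a bottom element, so the meet of $\ub$ and $\vb$ is the join of the (nonempty, max-closed) set of common lower bounds, and $\WordPoset(m,n)$ is a lattice. I would drop the middle ``repair procedure'' discussion entirely: as you yourself observe, the componentwise minimum can leave $\Words(m,n)$ and the confluence of an ad hoc sweep is exactly the delicate point, whereas the semilattice-plus-bottom argument makes it unnecessary. Also, your initial worry that componentwise maxima could violate \ref{it:mn_1} is unfounded (you correct this yourself later), and the aside that ``dualizing handles joins'' is misleading --- $\Words(m,n)$ is not closed under componentwise minima, so the dual argument does not literally go through; the joins are simply the componentwise maxima directly.
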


The main purpose of this article is a thorough study of the lattice $\WordPoset(m,n)$ through which we will exhibit various nice properties. Before we can obtain certain enumerative results, it will be of major help to understand some of the structural properties of $\WordPoset(m,n)$.

\section{Lattice-theoretic Properties of $\WordPoset(m,n)$}
    \label{sec:mn_lattice_properties}

\subsection{Interval-constructability}

Given a word $\wb=w_{1}w_{2}\ldots w_{n}\in\WordPoset(m,n)$, we write $\min(\wb)$ for the minimum letter of $\wb$. For $i\in[0,m]$ let us define
\begin{equation}\label{eq:mn_slice}
	\Words^{(i)}(m,n) \defs \bigl\{\wb\in\Words(m,n)\colon \min(\wb)=i\bigr\}.
\end{equation}
Then, clearly, $\Words^{(0)}(m,n)=\Words(m,n)$ and $\Words^{(i-1)}(m,n)\supsetneq\Words^{(i)}(m,n)$ for $i\in[m]$.

\begin{lemma}\label{lem:slice_interval}
	Let $i\in[0,m]$. The poset $\bigl(\Words^{(i)}(m,n),\comporder\bigr)$ is an interval of $\WordPoset(m,n)$.
\end{lemma}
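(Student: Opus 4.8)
The plan is to name the two endpoints of the claimed interval explicitly and then check that the slice is exactly the set of words between them; the only real issue is which set $\Words^{(i)}(m,n)$ denotes. The surrounding text fixes this: the assertions $\Words^{(0)}(m,n)=\Words(m,n)$ and $\Words^{(i-1)}(m,n)\supsetneq\Words^{(i)}(m,n)$ force the reading $\Words^{(i)}(m,n)=\bigl\{\wb:\min(\wb)\geq i\bigr\}$, i.e. the $(m,n)$-words all of whose letters are at least $i$, and I would adopt this reading. Indeed, with the literal reading $\min(\wb)=i$ the statement is false: already for $m=1$, $n=2$, $i=0$ the words of minimum exactly $0$ are $00$, $02$, $10$, and here $02$ and $10$ are incomparable maximal elements, so this slice has no top and is not an interval. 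Thus ``$=i$'' must be read as ``$\geq i$'', which is also what $\Words^{(0)}(m,n)=\Words(m,n)$ already demands.

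With that reading I would take as bottom the constant word $\ab^{(i)}\defs ii\cdots i$ and as top the global maximum $\hat{1}\defs m(m{+}1)(m{+}1)\cdots(m{+}1)$ identified in Section~\ref{sec:mn_words}; note $\ab^{(0)}=\ob$. The first step is the routine validity check $\ab^{(i)}\in\Words(m,n)$: MN1 holds because $i\leq m<m+1$, and MN2 holds because $i$ is the only letter occurring, so each occurrence is dominated by the (equal) earlier letters, while for $i=0$ the condition MN2 is vacuous. Then I would prove the set equality $\Words^{(i)}(m,n)=[\ab^{(i)},\hat{1}]$ by two immediate containments in the componentwise order: if $\min(\wb)\geq i$ then $w_k\geq i$ for every $k$, so $\ab^{(i)}\comporder\wb\comporder\hat{1}$ (the right inequality holding for every word, since $\hat{1}$ is the maximum); conversely $\ab^{(i)}\comporder\wb$ forces $w_k\geq i$ for all $k$, hence $\min(\wb)\geq i$ and $\wb\in\Words^{(i)}(m,n)$. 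Since an interval is by definition a set of the form $[p,q]$, this is exactly the claim.

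I expect no serious obstacle once $\ab^{(i)}$ is recognized as the least word all of whose letters are $\geq i$: the lemma then reduces to the validity of $\ab^{(i)}$ and the observation that membership in the slice is cut out by a single pointwise lower bound, both handled by the one-line arguments above. The only genuinely delicate point is the interpretation flagged in the first paragraph, and I would add a clarifying sentence fixing $\Words^{(i)}(m,n)=\{\wb:\min(\wb)\geq i\}$. This keeps the lemma true for every $i\in[0,m]$, consistent with $\Words^{(0)}(m,n)=\Words(m,n)$, and produces the nested chain of intervals $[\ab^{(0)},\hat{1}]\supseteq[\ab^{(1)},\hat{1}]\supseteq\cdots\supseteq[\ab^{(m)},\hat{1}]$ that the interval-doubling analysis behind Theorem~\ref{thm:main_theorem} will exploit.
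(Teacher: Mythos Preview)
Your proof is correct and follows essentially the same argument as the paper: both identify the interval endpoints as the constant word $ii\cdots i$ and the global maximum $m(m{+}1)\cdots(m{+}1)$, and verify the two containments componentwise. Your reading $\Words^{(i)}(m,n)=\{\wb:\min(\wb)\geq i\}$ is exactly the one the paper itself uses in its proof (it writes ``$\min(\vb)=m\geq i$ which implies that $\vb\in\Words^{(i)}(m,n)$'' and ``$\min(\wb)\geq i$ and thus $\wb\in\Words^{(i)}(m,n)$''), so your diagnosis of the typo in the displayed definition is on the mark.
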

\begin{proof}
	Let $i\in[0,m]$ and consider the greatest element $\vb=v_{1}v_{2}\ldots v_{n}$ of $\WordPoset(m,n)$. This means that $v_{1}=m$ and $v_{2}=\cdots=v_{n}=m+1$.  Then, clearly $\min(\vb)=m\geq i$ which implies that $\vb\in\Words^{(i)}(m,n)$. Moreover, if $\ub=u_{1}u_{2}\cdots u_{n}$ with $u_{1}=u_{2}=\cdots=u_{n}=i$, then $\ub\in\Words^{(i)}(m,n)$, too. Clearly, any $\wb\in\Words^{(i)}$ must have $\ub\comporder\wb\comporder\vb$, which implies that the induced subposet $\bigl(\Words^{(i)}(m,n),\comporder\bigr)$ is contained in the interval $[\ub,\vb]$ of $\WordPoset(m,n)$.
	
	Conversely, let $\wb\in\Words(m,n)$ such that $\ub\comporder\wb\comporder\vb$. This means, however, that $\min(\wb)\geq i$ and thus $\wb\in\Words^{(i)}(m,n)$. This proves the claim.
\end{proof}

If $\wb$ is a word, then for any integer $i$, we write $\wb i$ for the word obtained by adding the letter $i$ to the end of $\wb$.

\begin{lemma}\label{lem:slice_enrichment}
	Let $i\in[0,m]$. For $\wb\in\Words^{(i)}(m,n)$, we have $\wb i\in\Words(m,n+1)$.
\end{lemma}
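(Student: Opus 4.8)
The plan is to check directly that the word $\wb i=w_1w_2\ldots w_n i$ satisfies the two defining requirements \ref{it:mn_1} and \ref{it:mn_2} of an element of $\Words(m,n+1)$. Since $i\in[0,m]$, the appended letter lies in the alphabet $\{0,1,\ldots,m+1\}$, so $\wb i$ is a legitimate word of length $n+1$, and it only remains to verify the two conditions.

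First I would dispose of \ref{it:mn_1}. If $n\geq 1$ the leading letter of $\wb i$ is $w_1$, which is not $m+1$ because $\wb\in\Words(m,n)$; if $n=0$ the leading (and only) letter is $i$, and $i\leq m<m+1$. So \ref{it:mn_1} is immediate.

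The substantive point is \ref{it:mn_2}. Fix $s\in[m]$ and suppose that position $k$ of $\wb i$ carries the letter $s$. If $k\leq n$, the required inequality on the preceding letters is inherited verbatim from \ref{it:mn_2} for $\wb$, because the first $n$ letters of $\wb i$ coincide with those of $\wb$. If $k=n+1$, then $s=i$, and this is exactly where the hypothesis $\wb\in\Words^{(i)}(m,n)$ is needed: $\min(\wb)=i$ forces $w_j\geq i=s$ for all $j\in[n]$, which is precisely the condition \ref{it:mn_2} imposes at the appended position. Putting the two cases together yields $\wb i\in\Words(m,n+1)$.

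I do not anticipate a genuine obstacle; the statement follows from unwinding the definitions. The only place where the specific assumption $\min(\wb)=i$ (as opposed to merely $\wb\in\Words(m,n)$) enters is the case $k=n+1$ of \ref{it:mn_2}, so that is the one step worth writing out carefully.
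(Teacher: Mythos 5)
Your proof is correct and follows essentially the same route as the paper's: a direct verification that $\wb i$ satisfies \ref{it:mn_1} and \ref{it:mn_2}, with the hypothesis $\wb\in\Words^{(i)}(m,n)$ entering only to guarantee $w_j\geq i$ for all $j\in[n]$ at the appended position. Your version is slightly more careful (explicit case split on the position $k$ and the degenerate case $n=0$), but the substance is identical.
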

\begin{proof}
	Let $\wb=\Words(m,n)$. Since $\wb\in\Words^{(i)}(m,n)\subseteq\Words(m,n)$, it follows that $w_{i}\in[0,m+1]$ and $w_{1}\neq m+1$ by \eqref{it:mn_1}. Moreover, for any $i\in[n]$ it is guaranteed by \eqref{it:mn_2} that $w_{i}=s\in[1,m]$ implies $w_{j}\geq s$ for all $j<i$. 
	
	Then, the word $\wb i$ is formed of letters from the interval $[0,m+1]$, too, and satisfies \eqref{it:mn_1}.  Moreover, $\wb\in\Words^{(i)}(m,n)$ implies that $w_{j}\geq i$ for all $j\in[n]$, so that $\wb i$ also satisfies \eqref{it:mn_2}. It follows that $\wb i\in\Words(m,n+1)$.
\end{proof}

We will now prove one part of our main result.

\begin{theorem}\label{thm:mn_doubling}
	For $m,n\geq 0$, the lattice $\WordPoset(m,n)$ is interval-constructable.
\end{theorem}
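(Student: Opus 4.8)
The plan is to prove Theorem~\ref{thm:mn_doubling} by induction on $m$, peeling off the ``slices'' $\Words^{(i)}(m,n)$ one at a time using the machinery of Lemmas~\ref{lem:slice_interval} and~\ref{lem:slice_enrichment}. The base case $m=0$ is the observation that an $(0,n)$-word is just a binary word $w_1\ldots w_n$ with no constraint beyond $w_1\neq 1$ — wait, \ref{it:mn_1} forbids $w_1=m+1=1$, so actually $w_1=0$ — hence $\WordPoset(0,n)$ is isomorphic to a boolean lattice on $n-1$ atoms (a product of copies of $\two$), and any boolean lattice is interval-constructable by repeatedly doubling the top element. More robustly, one can check directly that $\WordPoset(0,1)\cong\two$ and that passing from $n$ to $n+1$ in the $m=0$ case is a single interval doubling (doubling by the whole lattice, which adjoins a new final coordinate).

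The inductive step is the heart of the argument. Assume $\WordPoset(m-1,n')$ is interval-constructable for all $n'$; I want to build $\WordPoset(m,n)$. The key structural fact is the chain of interval sublattices
\begin{displaymath}
    \WordPoset(m,n) = \bigl(\Words^{(0)}(m,n),\comporder\bigr) \supseteq \bigl(\Words^{(1)}(m,n),\comporder\bigr) \supseteq \cdots \supseteq \bigl(\Words^{(m)}(m,n),\comporder\bigr),
\end{displaymath}
where each $\bigl(\Words^{(i)}(m,n),\comporder\bigr)$ is an interval of $\WordPoset(m,n)$ by Lemma~\ref{lem:slice_interval}. The innermost slice $\Words^{(m)}(m,n)$ consists of words all of whose letters lie in $\{m,m+1\}$ with $w_1=m$; this is again a boolean lattice (on $n-1$ atoms), hence interval-constructable. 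I claim that $\bigl(\Words^{(i-1)}(m,n),\comporder\bigr)$ is the doubling of $\bigl(\Words^{(i)}(m,n),\comporder\bigr)$ by a suitable interval, so that, starting from $\Words^{(m)}(m,n)$ and doubling $m$ times, we reach $\WordPoset(m,n)$. To identify the interval: the elements of $\Words^{(i-1)}(m,n)\setminus\Words^{(i)}(m,n)$ are exactly the words $\wb$ with $\min(\wb)=i-1$, and using \ref{it:mn_2} with $s=i$ one sees that the positions where the letter $i-1$ occurs in such a word are forced to form a suffix-like pattern; in fact I expect the map $\wb\mapsto$ (the word obtained by replacing every occurrence of $i-1$ by $i$, then restricting to an $(m,n)$- or $(m-1,n)$-type word) realizes $\Words^{(i-1)}(m,n)$ as $P[X]$ for the lattice $P=\bigl(\Words^{(i)}(m,n),\comporder\bigr)$ and $X$ the image of this ``raise'' map — one must check $X$ is an interval of $P$ and that the ideal $P_{\leq X}$ together with $X$ and $P\setminus P_{\leq X}$ recovers exactly the right two copies. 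The enrichment Lemma~\ref{lem:slice_enrichment} is presumably the tool that lets one compare these slices across different values of $n$ and thereby connect the induction on $m$ to an induction on $n$, since raising letters shortens the ``active'' part of the word by creating a constant block at the front.

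The main obstacle, I expect, is verifying precisely that the relevant subset $X$ is an \emph{interval} of $\bigl(\Words^{(i)}(m,n),\comporder\bigr)$ and that the two pieces $P_{\leq X}$ and $(P\setminus P_{\leq X})\cup X$ glue back to give $\bigl(\Words^{(i-1)}(m,n),\comporder\bigr)$ with the correct order relation — i.e.\ that the ``new'' copy of $X$ really does sit below everything outside the ideal in the doubled poset in a way matching $\comporder$ on the larger word set. This is a bookkeeping check about which coordinates can be lowered from $i$ to $i-1$ while preserving \ref{it:mn_2}; the constraint \ref{it:mn_2} for $s=i$ says precisely that the set of positions holding a letter $\le i-1$ is downward-closed under $<$, i.e.\ an initial segment's complement, which is exactly the kind of condition that cuts out an interval. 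Once $X$ is pinned down, the doubling identity should follow by a direct comparison of covering relations, and then Theorem~\ref{thm:mn_doubling} follows by composing the $m$ doublings that build $\WordPoset(m,n)$ from its innermost boolean slice, together with the base case handling $n$.
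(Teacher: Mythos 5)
There is a genuine gap in your inductive step. You claim that $\bigl(\Words^{(i-1)}(m,n),\comporder\bigr)$ is obtained from $\bigl(\Words^{(i)}(m,n),\comporder\bigr)$ by a \emph{single} interval doubling, so that $m$ doublings of the innermost boolean slice produce $\WordPoset(m,n)$. This fails already for $(m,n)=(1,2)$: here $\Words(1,2)=\{00,02,10,11,12\}$ is the pentagon (length $3$), while $\Words^{(1)}(1,2)=\{11,12\}$ is a $2$-chain. A doubling of a lattice $P$ by a nonempty interval $X$ has exactly $\lvert P\rvert+\lvert X\rvert$ elements and length $\ell(P)+1$, so one doubling of the $2$-chain yields at most $4$ elements and length at most $2$ --- it cannot produce the pentagon. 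In general the innermost slice $\Words^{(m)}(m,n)$ is boolean of length $n-1$ while $\ell\bigl(\WordPoset(m,n)\bigr)=(m+1)n-1$, so any doubling construction starting from that slice would need $mn$ doublings, not $m$: the words with minimum letter exactly $i-1$ vastly outnumber the elements of any single interval of the smaller slice, so your ``raise $i-1$ to $i$'' map cannot realize $\Words^{(i-1)}(m,n)$ as $P[X]$ for one interval $X$. The vague appeal to Lemma~\ref{lem:slice_enrichment} to ``connect the induction on $m$ to an induction on $n$'' does not repair this.

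The paper's proof goes the other way: it fixes $m$ and inducts on $n$. One first forms $\Poset^{(0)}\cong\WordPoset(m,n)\times\two$ by appending the letter $0$ or $m+1$ to every word (a doubling by the whole lattice), and then performs $m$ further doublings, where the $i$th one doubles the interval $\bigl\{\wb 0\colon\wb\in\Words^{(j)}(m,n)\bigr\}$ with $j=m+1-i$ and thereby adjoins the words $\wb j$; Lemmas~\ref{lem:slice_interval} and~\ref{lem:slice_enrichment} serve precisely to certify that these sets are intervals and that the adjoined words are legal $(m,n{+}1)$-words. So the slices do play the role you anticipated, but as the intervals being doubled inside the partially built $\WordPoset(m,n{+}1)$, not as successive approximations to $\WordPoset(m,n)$ itself.
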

\begin{proof}
	We fix $m\geq 0$ and proceed by induction on $n$. If $n=0$, then $\WordPoset(m,0)$ is the singleton lattice consisting of the empty word. If $n=1$, then $\WordPoset(m,1)$ is an $m{+}1$-chain, consisting of the words $0,1,\ldots,m$. Both of these lattices are interval-constructable.

	Now let $n\geq 1$ and assume inductively that $\WordPoset(m,n)$ is interval-constructable.  We now describe how to obtain $\WordPoset(m,n+1)$ by $m+1$ successive interval doublings.

	First, we consider the poset $\Poset^{(0)}\defs\bigl(P^{(0)},\comporder\bigr)$, where
	\begin{displaymath}
		P^{(0)} \defs \bigl\{\wb0\colon \wb\in\Words(m,n)\bigr\}
			\uplus \bigl\{\wb (m{+}1)\colon \wb\in\Words(m,n)\bigr\}.
	\end{displaymath}
	Then, clearly, $\Poset^{(0)}\cong\WordPoset(m,n)\times\two$. Moreover, \eqref{it:mn_2} implies that $P^{(0)}\subseteq\Words(m,n{+}1)$.

	Now, for $i\in[m]$, we set $j \defs m+1-i$ and define
	\begin{displaymath}
		P^{(i)} \defs \bigl\{\wb j\colon \wb\in \Words^{(j)}(m,n)\bigr\}.
	\end{displaymath}
	Lemma~\ref{lem:slice_enrichment} implies that $P^{(i)}\subseteq\Words(m,n+1)$. Moreover, it follows from Lemma~\ref{lem:slice_interval} that $\Words^{(j)}(m,n)$ is an interval of $\WordPoset(m,n)$, which by construction is isomorphic to the interval of $\Poset^{(0)}$ induced by $I^{(j)}\defs\bigl\{\wb0 \colon \wb\in\Words^{(j)}(m,n)\bigr\}$.  Thus, if we define
	\begin{displaymath}
		\Poset^{(i)} \defs \bigl(P^{(0)}\uplus P^{(1)}\uplus\cdots\uplus P^{(i)},\comporder\bigr),
	\end{displaymath}
	then $\Poset^{(i)}$ is obtained from $\Poset^{(i-1)}$ through doubling by $I^{(j)}$.

	\medskip

	It remains to show that $\Poset^{(m)}$ is indeed equal to $\WordPoset(m,n+1)$. Since both posets have as ground sets words of length $n+1$ and use componentwise order, it is enough to show that the ground set of $\Poset^{(m)}$ equals $\Words(m,n+1)$.

	The ground set of $\Poset^{(m)}$ is $P^{(0)}\uplus P^{(1)}\uplus\cdots\uplus P^{(m)}$, and we have already established that $P^{(i)}\subseteq\Words(m,n+1)$ for all $0\leq i\leq m$.\\
	Conversely, let $\wb\in\Words(m,n+1)$, and suppose that $\wb=w_{1}w_{2}\ldots w_{n}w_{n+1}$. Then, by definition, $w_{1}w_{2}\ldots w_{n}\in\Words(m,n)$. If $w_{n+1}\in\{0,m+1\}$, then $\wb\in P^{(0)}$. Otherwise, it must be that $w_{n+1}=m+1-i$ for some $i\in[m]$, which implies $\wb\in P^{(i)}$. Thus, $\wb$ is in the ground set of $\Poset^{(m)}$ and the proof is complete.
\end{proof}

For $m=1$, the doubling construction described in the proof of Theorem~\ref{thm:mn_doubling} consists of two steps and agrees with the doubling procedure described in \cite{combe21geometric}*{Section~3.3}.  Our doubling procedure is illustrated in Figure~\ref{fig:23_doubling}.

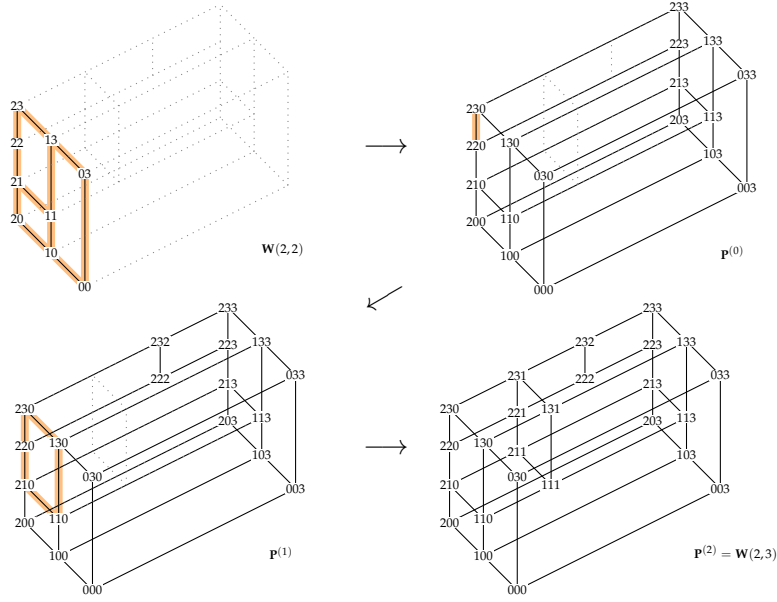
\begin{figure}
	\centering
\begin{tikzpicture}
	\def\x{.9};
	\def\y{.9};
	\draw(1,5) node{\begin{tikzpicture}[scale=.5]
		\coordinate(n000) at (3*\x,.9*\y);
		\coordinate(n003) at (9*\x,3.9*\y);
		\coordinate(n030) at (3*\x,4.25*\y);
		\coordinate(n033) at (9*\x,7.25*\y);
		\coordinate(n100) at (2*\x,1.9*\y);
		\coordinate(n103) at (8*\x,4.9*\y);
		\coordinate(n110) at (2*\x,3*\y);
		\coordinate(n111) at (4*\x,4*\y);
		\coordinate(n113) at (8*\x,6*\y);
		\coordinate(n130) at (2*\x,5.25*\y);
		\coordinate(n131) at (4*\x,6.25*\y);
		\coordinate(n133) at (8*\x,8.25*\y);
		\coordinate(n200) at (1*\x,2.9*\y);
		\coordinate(n203) at (7*\x,5.9*\y);
		\coordinate(n210) at (1*\x,4*\y);
		\coordinate(n211) at (3*\x,5*\y);
		\coordinate(n213) at (7*\x,7*\y);
		\coordinate(n220) at (1*\x,5.15*\y);
		\coordinate(n221) at (3*\x,6.15*\y);
		\coordinate(n222) at (5*\x,7.15*\y);
		\coordinate(n223) at (7*\x,8.15*\y);
		\coordinate(n230) at (1*\x,6.25*\y);
		\coordinate(n231) at (3*\x,7.25*\y);
		\coordinate(n232) at (5*\x,8.25*\y);
		\coordinate(n233) at (7*\x,9.25*\y);
		\draw[orange!50!white,line width=3pt](n000) -- (n100);
		\draw[orange!50!white,line width=3pt](n000) -- (n030);
		\draw[orange!50!white,line width=3pt](n030) -- (n130);
		\draw[orange!50!white,line width=3pt](n100) -- (n110);
		\draw[orange!50!white,line width=3pt](n100) -- (n200);
		\draw[orange!50!white,line width=3pt](n110) -- (n130);
		\draw[orange!50!white,line width=3pt](n110) -- (n210);
		\draw[orange!50!white,line width=3pt](n130) -- (n230);
		\draw[orange!50!white,line width=3pt](n200) -- (n210);
		\draw[orange!50!white,line width=3pt](n210) -- (n220);
		\draw[orange!50!white,line width=3pt](n220) -- (n230);
		\draw(n000) -- (n100);
		\draw(n000) -- (n030);
		\draw(n030) -- (n130);
		\draw(n100) -- (n110);
		\draw(n100) -- (n200);
		\draw(n110) -- (n130);
		\draw(n110) -- (n210);
		\draw(n130) -- (n230);
		\draw(n200) -- (n210);
		\draw(n210) -- (n220);
		\draw(n220) -- (n230);
		\draw[dotted,thin,gray](n000) -- (n003);
		\draw[dotted,thin,gray](n003) -- (n033);
		\draw[dotted,thin,gray](n003) -- (n103);
		\draw[dotted,thin,gray](n030) -- (n033);
		\draw[dotted,thin,gray](n033) -- (n133);
		\draw[dotted,thin,gray](n100) -- (n103);
		\draw[dotted,thin,gray](n103) -- (n113);
		\draw[dotted,thin,gray](n103) -- (n203);
		\draw[dotted,thin,gray](n110) -- (n111);
		\draw[dotted,thin,gray](n111) -- (n113);
		\draw[dotted,thin,gray](n111) -- (n131);
		\draw[dotted,thin,gray](n111) -- (n211);
		\draw[dotted,thin,gray](n113) -- (n133);
		\draw[dotted,thin,gray](n113) -- (n213);
		\draw[dotted,thin,gray](n130) -- (n131);
		\draw[dotted,thin,gray](n131) -- (n133);
		\draw[dotted,thin,gray](n131) -- (n231);
		\draw[dotted,thin,gray](n133) -- (n233);
		\draw[dotted,thin,gray](n200) -- (n203);
		\draw[dotted,thin,gray](n203) -- (n213);
		\draw[dotted,thin,gray](n210) -- (n211);
		\draw[dotted,thin,gray](n211) -- (n213);
		\draw[dotted,thin,gray](n211) -- (n221);
		\draw[dotted,thin,gray](n213) -- (n223);
		\draw[dotted,thin,gray](n220) -- (n221);
		\draw[dotted,thin,gray](n230) -- (n231);
		\draw[dotted,thin,gray](n221) -- (n231);
		\draw[dotted,thin,gray](n221) -- (n222);
		\draw[dotted,thin,gray](n222) -- (n223);
		\draw[dotted,thin,gray](n222) -- (n232);
		\draw[dotted,thin,gray](n223) -- (n233);
		\draw[dotted,thin,gray](n231) -- (n232);
		\draw[dotted,thin,gray](n232) -- (n233);
		\draw(n000) node[wordnode]{\tiny $00$};
		\draw(n030) node[wordnode]{\tiny $03$};
		\draw(n100) node[wordnode]{\tiny $10$};
		\draw(n110) node[wordnode]{\tiny $11$};
		\draw(n130) node[wordnode]{\tiny $13$};
		\draw(n200) node[wordnode]{\tiny $20$};
		\draw(n210) node[wordnode]{\tiny $21$};
		\draw(n220) node[wordnode]{\tiny $22$};
		\draw(n230) node[wordnode]{\tiny $23$};
		\draw(8*\x,2*\y) node[anchor=west,scale=.8]{\tiny $\WordPoset(2,2)$};
	\end{tikzpicture}};
	\draw(4,5) node{$\longrightarrow$};
	\draw(7,5) node{\begin{tikzpicture}[scale=.5]
		\coordinate(n000) at (3*\x,.9*\y);
		\coordinate(n003) at (9*\x,3.9*\y);
		\coordinate(n030) at (3*\x,4.25*\y);
		\coordinate(n033) at (9*\x,7.25*\y);
		\coordinate(n100) at (2*\x,1.9*\y);
		\coordinate(n103) at (8*\x,4.9*\y);
		\coordinate(n110) at (2*\x,3*\y);
		\coordinate(n111) at (4*\x,4*\y);
		\coordinate(n113) at (8*\x,6*\y);
		\coordinate(n130) at (2*\x,5.25*\y);
		\coordinate(n131) at (4*\x,6.25*\y);
		\coordinate(n133) at (8*\x,8.25*\y);
		\coordinate(n200) at (1*\x,2.9*\y);
		\coordinate(n203) at (7*\x,5.9*\y);
		\coordinate(n210) at (1*\x,4*\y);
		\coordinate(n211) at (3*\x,5*\y);
		\coordinate(n213) at (7*\x,7*\y);
		\coordinate(n220) at (1*\x,5.15*\y);
		\coordinate(n221) at (3*\x,6.15*\y);
		\coordinate(n222) at (5*\x,7.15*\y);
		\coordinate(n223) at (7*\x,8.15*\y);
		\coordinate(n230) at (1*\x,6.25*\y);
		\coordinate(n231) at (3*\x,7.25*\y);
		\coordinate(n232) at (5*\x,8.25*\y);
		\coordinate(n233) at (7*\x,9.25*\y);
		\draw[orange!50!white,line width=3pt](n220) -- (n230);
		\draw(n000) -- (n003);
		\draw(n000) -- (n030);
		\draw(n000) -- (n100);
		\draw(n003) -- (n033);
		\draw(n003) -- (n103);
		\draw(n030) -- (n033);
		\draw(n030) -- (n130);
		\draw(n033) -- (n133);
		\draw(n100) -- (n103);
		\draw(n100) -- (n110);
		\draw(n100) -- (n200);
		\draw(n103) -- (n113);
		\draw(n103) -- (n203);
		\draw(n110) -- (n210);
		\draw(n110) -- (n130);
		\draw(n110) -- (n113);
		\draw(n113) -- (n133);
		\draw(n113) -- (n213);
		\draw(n130) -- (n133);
		\draw(n130) -- (n230);
		\draw(n133) -- (n233);
		\draw(n200) -- (n203);
		\draw(n200) -- (n210);
		\draw(n203) -- (n213);
		\draw(n210) -- (n213);
		\draw(n210) -- (n220);
		\draw(n213) -- (n223);
		\draw(n220) -- (n223);
		\draw(n220) -- (n230);
		\draw(n223) -- (n233);
		\draw(n230) -- (n233);
		\draw[dotted,thin,gray](n200) -- (n203);
		\draw[dotted,thin,gray](n110) -- (n111);
		\draw[dotted,thin,gray](n111) -- (n113);
		\draw[dotted,thin,gray](n111) -- (n131);
		\draw[dotted,thin,gray](n111) -- (n211);
		\draw[dotted,thin,gray](n130) -- (n131);
		\draw[dotted,thin,gray](n131) -- (n133);
		\draw[dotted,thin,gray](n131) -- (n231);
		\draw[dotted,thin,gray](n210) -- (n211);
		\draw[dotted,thin,gray](n211) -- (n213);
		\draw[dotted,thin,gray](n211) -- (n221);
		\draw[dotted,thin,gray](n220) -- (n221);
		\draw[dotted,thin,gray](n221) -- (n222);
		\draw[dotted,thin,gray](n221) -- (n231);
		\draw[dotted,thin,gray](n222) -- (n223);
		\draw[dotted,thin,gray](n222) -- (n232);
		\draw[dotted,thin,gray](n230) -- (n231);
		\draw[dotted,thin,gray](n231) -- (n232);
		\draw[dotted,thin,gray](n232) -- (n233);
		\draw(n000) node[wordnode]{\tiny $000$};
		\draw(n003) node[wordnode]{\tiny $003$};
		\draw(n030) node[wordnode]{\tiny $030$};
		\draw(n033) node[wordnode]{\tiny $033$};
		\draw(n100) node[wordnode]{\tiny $100$};
		\draw(n103) node[wordnode]{\tiny $103$};
		\draw(n110) node[wordnode]{\tiny $110$};
		\draw(n113) node[wordnode]{\tiny $113$};
		\draw(n130) node[wordnode]{\tiny $130$};
		\draw(n133) node[wordnode]{\tiny $133$};
		\draw(n200) node[wordnode]{\tiny $200$};
		\draw(n203) node[wordnode]{\tiny $203$};
		\draw(n210) node[wordnode]{\tiny $210$};
		\draw(n213) node[wordnode]{\tiny $213$};
		\draw(n220) node[wordnode]{\tiny $220$};
		\draw(n223) node[wordnode]{\tiny $223$};
		\draw(n230) node[wordnode]{\tiny $230$};
		\draw(n233) node[wordnode]{\tiny $233$};
		\draw(8*\x,2*\y) node[anchor=west,scale=.8]{\tiny $\Poset^{(0)}$};
	\end{tikzpicture}};
	\draw(4,3) node[rotate=30]{$\longleftarrow$};
	\draw(1,1) node{\begin{tikzpicture}[scale=.5]
		\coordinate(n000) at (3*\x,.9*\y);
		\coordinate(n003) at (9*\x,3.9*\y);
		\coordinate(n030) at (3*\x,4.25*\y);
		\coordinate(n033) at (9*\x,7.25*\y);
		\coordinate(n100) at (2*\x,1.9*\y);
		\coordinate(n103) at (8*\x,4.9*\y);
		\coordinate(n110) at (2*\x,3*\y);
		\coordinate(n111) at (4*\x,4*\y);
		\coordinate(n113) at (8*\x,6*\y);
		\coordinate(n130) at (2*\x,5.25*\y);
		\coordinate(n131) at (4*\x,6.25*\y);
		\coordinate(n133) at (8*\x,8.25*\y);
		\coordinate(n200) at (1*\x,2.9*\y);
		\coordinate(n203) at (7*\x,5.9*\y);
		\coordinate(n210) at (1*\x,4*\y);
		\coordinate(n211) at (3*\x,5*\y);
		\coordinate(n213) at (7*\x,7*\y);
		\coordinate(n220) at (1*\x,5.15*\y);
		\coordinate(n221) at (3*\x,6.15*\y);
		\coordinate(n222) at (5*\x,7.15*\y);
		\coordinate(n223) at (7*\x,8.15*\y);
		\coordinate(n230) at (1*\x,6.25*\y);
		\coordinate(n231) at (3*\x,7.25*\y);
		\coordinate(n232) at (5*\x,8.25*\y);
		\coordinate(n233) at (7*\x,9.25*\y);
		\draw[orange!50!white,line width=3pt](n110) -- (n210);
		\draw[orange!50!white,line width=3pt](n110) -- (n130);
		\draw[orange!50!white,line width=3pt](n130) -- (n230);
		\draw[orange!50!white,line width=3pt](n210) -- (n220);
		\draw[orange!50!white,line width=3pt](n220) -- (n230);
		\draw(n000) -- (n003);
		\draw(n000) -- (n030);
		\draw(n000) -- (n100);
		\draw(n003) -- (n033);
		\draw(n003) -- (n103);
		\draw(n030) -- (n033);
		\draw(n030) -- (n130);
		\draw(n033) -- (n133);
		\draw(n100) -- (n103);
		\draw(n100) -- (n110);
		\draw(n100) -- (n200);
		\draw(n103) -- (n113);
		\draw(n103) -- (n203);
		\draw(n110) -- (n210);
		\draw(n110) -- (n130);
		\draw(n110) -- (n113);
		\draw(n113) -- (n133);
		\draw(n113) -- (n213);
		\draw(n130) -- (n133);
		\draw(n130) -- (n230);
		\draw(n133) -- (n233);
		\draw(n200) -- (n203);
		\draw(n200) -- (n210);
		\draw(n203) -- (n213);
		\draw(n210) -- (n213);
		\draw(n210) -- (n220);
		\draw(n213) -- (n223);
		\draw(n220) -- (n222);
		\draw(n220) -- (n230);
		\draw(n222) -- (n232);
		\draw(n222) -- (n223);
		\draw(n223) -- (n233);
		\draw(n230) -- (n232);
		\draw(n232) -- (n233);
		\draw[dotted,thin,gray](n200) -- (n203);
		\draw[dotted,thin,gray](n110) -- (n111);
		\draw[dotted,thin,gray](n111) -- (n113);
		\draw[dotted,thin,gray](n111) -- (n131);
		\draw[dotted,thin,gray](n111) -- (n211);
		\draw[dotted,thin,gray](n130) -- (n131);
		\draw[dotted,thin,gray](n131) -- (n133);
		\draw[dotted,thin,gray](n131) -- (n231);
		\draw[dotted,thin,gray](n210) -- (n211);
		\draw[dotted,thin,gray](n211) -- (n213);
		\draw[dotted,thin,gray](n211) -- (n221);
		\draw[dotted,thin,gray](n220) -- (n221);
		\draw[dotted,thin,gray](n221) -- (n222);
		\draw[dotted,thin,gray](n221) -- (n231);
		\draw[dotted,thin,gray](n230) -- (n231);
		\draw[dotted,thin,gray](n231) -- (n232);
		\draw(n000) node[wordnode]{\tiny $000$};
		\draw(n003) node[wordnode]{\tiny $003$};
		\draw(n030) node[wordnode]{\tiny $030$};
		\draw(n033) node[wordnode]{\tiny $033$};
		\draw(n100) node[wordnode]{\tiny $100$};
		\draw(n103) node[wordnode]{\tiny $103$};
		\draw(n110) node[wordnode]{\tiny $110$};
		\draw(n113) node[wordnode]{\tiny $113$};
		\draw(n130) node[wordnode]{\tiny $130$};
		\draw(n133) node[wordnode]{\tiny $133$};
		\draw(n200) node[wordnode]{\tiny $200$};
		\draw(n203) node[wordnode]{\tiny $203$};
		\draw(n210) node[wordnode]{\tiny $210$};
		\draw(n213) node[wordnode]{\tiny $213$};
		\draw(n220) node[wordnode]{\tiny $220$};
		\draw(n222) node[wordnode]{\tiny $222$};
		\draw(n223) node[wordnode]{\tiny $223$};
		\draw(n230) node[wordnode]{\tiny $230$};
		\draw(n232) node[wordnode]{\tiny $232$};
		\draw(n233) node[wordnode]{\tiny $233$};
		\draw(8*\x,2*\y) node[anchor=west,scale=.8]{\tiny $\Poset^{(1)}$};
	\end{tikzpicture}};
	\draw(4,1) node{$\longrightarrow$};
	\draw(7,1) node{\begin{tikzpicture}[scale=.5]
		\coordinate(n000) at (3*\x,.9*\y);
		\coordinate(n003) at (9*\x,3.9*\y);
		\coordinate(n030) at (3*\x,4.25*\y);
		\coordinate(n033) at (9*\x,7.25*\y);
		\coordinate(n100) at (2*\x,1.9*\y);
		\coordinate(n103) at (8*\x,4.9*\y);
		\coordinate(n110) at (2*\x,3*\y);
		\coordinate(n111) at (4*\x,4*\y);
		\coordinate(n113) at (8*\x,6*\y);
		\coordinate(n130) at (2*\x,5.25*\y);
		\coordinate(n131) at (4*\x,6.25*\y);
		\coordinate(n133) at (8*\x,8.25*\y);
		\coordinate(n200) at (1*\x,2.9*\y);
		\coordinate(n203) at (7*\x,5.9*\y);
		\coordinate(n210) at (1*\x,4*\y);
		\coordinate(n211) at (3*\x,5*\y);
		\coordinate(n213) at (7*\x,7*\y);
		\coordinate(n220) at (1*\x,5.15*\y);
		\coordinate(n221) at (3*\x,6.15*\y);
		\coordinate(n222) at (5*\x,7.15*\y);
		\coordinate(n223) at (7*\x,8.15*\y);
		\coordinate(n230) at (1*\x,6.25*\y);
		\coordinate(n231) at (3*\x,7.25*\y);
		\coordinate(n232) at (5*\x,8.25*\y);
		\coordinate(n233) at (7*\x,9.25*\y);
		\draw(n000) -- (n003);
		\draw(n000) -- (n030);
		\draw(n000) -- (n100);
		\draw(n003) -- (n033);
		\draw(n003) -- (n103);
		\draw(n030) -- (n033);
		\draw(n030) -- (n130);
		\draw(n033) -- (n133);
		\draw(n100) -- (n103);
		\draw(n100) -- (n110);
		\draw(n100) -- (n200);
		\draw(n103) -- (n113);
		\draw(n103) -- (n203);
		\draw(n110) -- (n210);
		\draw(n110) -- (n111);
		\draw(n110) -- (n130);
		\draw(n111) -- (n113);
		\draw(n111) -- (n131);
		\draw(n111) -- (n211);
		\draw(n113) -- (n133);
		\draw(n113) -- (n213);
		\draw(n130) -- (n131);
		\draw(n130) -- (n230);
		\draw(n131) -- (n133);
		\draw(n131) -- (n231);
		\draw(n133) -- (n233);
		\draw(n200) -- (n203);
		\draw(n200) -- (n210);
		\draw(n203) -- (n213);
		\draw(n210) -- (n211);
		\draw(n210) -- (n220);
		\draw(n211) -- (n213);
		\draw(n211) -- (n221);
		\draw(n213) -- (n223);
		\draw(n220) -- (n221);
		\draw(n220) -- (n230);
		\draw(n221) -- (n222);
		\draw(n221) -- (n231);
		\draw(n222) -- (n232);
		\draw(n222) -- (n223);
		\draw(n223) -- (n233);
		\draw(n230) -- (n231);
		\draw(n231) -- (n232);
		\draw(n232) -- (n233);
		\draw(n000) node[wordnode]{\tiny $000$};
		\draw(n003) node[wordnode]{\tiny $003$};
		\draw(n030) node[wordnode]{\tiny $030$};
		\draw(n033) node[wordnode]{\tiny $033$};
		\draw(n100) node[wordnode]{\tiny $100$};
		\draw(n103) node[wordnode]{\tiny $103$};
		\draw(n110) node[wordnode]{\tiny $110$};
		\draw(n111) node[wordnode]{\tiny $111$};
		\draw(n113) node[wordnode]{\tiny $113$};
		\draw(n130) node[wordnode]{\tiny $130$};
		\draw(n131) node[wordnode]{\tiny $131$};
		\draw(n133) node[wordnode]{\tiny $133$};
		\draw(n200) node[wordnode]{\tiny $200$};
		\draw(n203) node[wordnode]{\tiny $203$};
		\draw(n210) node[wordnode]{\tiny $210$};
		\draw(n211) node[wordnode]{\tiny $211$};
		\draw(n213) node[wordnode]{\tiny $213$};
		\draw(n220) node[wordnode]{\tiny $220$};
		\draw(n221) node[wordnode]{\tiny $221$};
		\draw(n222) node[wordnode]{\tiny $222$};
		\draw(n223) node[wordnode]{\tiny $223$};
		\draw(n230) node[wordnode]{\tiny $230$};
		\draw(n231) node[wordnode]{\tiny $231$};
		\draw(n232) node[wordnode]{\tiny $232$};
		\draw(n233) node[wordnode]{\tiny $233$};
		\draw(8*\x,2*\y) node[anchor=west,scale=.8]{\tiny $\Poset^{(2)}=\WordPoset(2,3)$};
	\end{tikzpicture}};
\end{tikzpicture}
	\caption{The doubling procedure that produces $\WordPoset(2,3)$ from $\WordPoset(2,2)$. Along each arrow, the highlighted interval is doubled.}
	\label{fig:23_doubling}
\end{figure}

\begin{corollary}\label{cor:mn_semidistributive}
    For $m,n\geq 0$, the lattice $\WordPoset(m,n)$ is semidistributive.
\end{corollary}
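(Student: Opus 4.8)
The plan is to obtain this statement as an immediate corollary of the two results already assembled above. By Theorem~\ref{thm:mn_doubling}, the lattice $\WordPoset(m,n)$ is interval-constructable, i.e., it arises from the $2$-chain $\two$ by a sequence of interval doublings. By Theorem~\ref{thm:semidistributive_interval_constructible} (Day's characterization), every interval-constructable lattice is semidistributive. Concatenating these two facts yields that $\WordPoset(m,n)$ is semidistributive, which is exactly the claim; no further work is needed.

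Since the argument is a direct application of the preceding theorems, there is no genuine obstacle here. The only thing worth flagging is a boundary check on the degenerate cases: for $n=0$ the lattice is a singleton and for $n=1$ it is a chain, and both are trivially semidistributive (indeed both are covered by the base case of the induction in the proof of Theorem~\ref{thm:mn_doubling}), so the chain of implications applies uniformly for all $m,n\geq 0$. One could alternatively remark that semidistributivity will also follow a posteriori once extremality and trimness are established in the proof of Theorem~\ref{thm:main_theorem} (via Theorem~\ref{thm:semidistributive_extremal_trim}), but deducing it now from interval-constructability is the cleanest route and is precisely what is needed as an input to the extremality argument in the next subsection.
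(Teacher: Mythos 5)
Your proof is correct and matches the paper's own argument exactly: the corollary is deduced by combining Theorem~\ref{thm:mn_doubling} (interval-constructability) with Theorem~\ref{thm:semidistributive_interval_constructible} (interval-constructable implies semidistributive). The additional remarks on degenerate cases and alternative routes are harmless but unnecessary.
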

\begin{proof}
    This follows from Theorems~\ref{thm:mn_doubling} and \ref{thm:semidistributive_interval_constructible}.
\end{proof}

%

\subsection{Extremality}\label{sec:extremality}

Let us consider the following words:
\begin{align*}
    \ab^{(i,j)} & \defs \begin{cases}a_k=j, & \text{if}\;k\leq i,\\a_k=0, & \text{if}\;k>i,\end{cases}\\
    \bb^{(i)} & \defs \begin{cases}b_k=m+1, & \text{if}\;k=i,\\b_k=0, & \text{if}\;k\neq i.\end{cases}
\end{align*}
By \eqref{it:mn_1}, $\ab^{(i,j)}\in\Words(m,n)$ if $i\in[n]$ and $j\in[0,m]$ and $\bb_{i}\in\Words(m,n)$ if $i\in[2,n]$.

\begin{lemma}\label{lem:mn_irreducibles}
    An $(m,n)$-word $\wb\in\Words(m,n)$ is join-irreducible in $\WordPoset(m,n)$ if and only if either $\wb=\ab^{(i,j)}$ for $i\in[n]$ and $j\in[m]$ or $\wb=\bb^{(i)}$ for $i\in[2,n]$.
\end{lemma}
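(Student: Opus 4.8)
The plan is to use the standard fact that, in a finite lattice, an element is join-irreducible if and only if it has a unique lower cover, together with the observation that joins in $\WordPoset(m,n)$ are taken coordinatewise. The latter holds because $\Words(m,n)$ is closed under coordinatewise maximum: property \eqref{it:mn_1} is preserved since $u_1,v_1\leq m$, and for \eqref{it:mn_2}, if $\max(u_i,v_i)=s\in[1,m]$ then, say, $u_i=s$, so $u_j\geq s$ for all $j<i$ and hence $\max(u_j,v_j)\geq s$. Thus $\ub\vee\vb$ is the coordinatewise maximum of $\ub$ and $\vb$, and for $\wb\neq\ob$ we obtain: $\wb$ is join-irreducible if and only if $\wb$ is \emph{not} the coordinatewise maximum of the words of $\Words(m,n)$ lying strictly below it, equivalently, there is a coordinate $p$ with $w_p\geq1$ such that $\wb$ is the only word of $\Words(m,n)$ that is $\comporder\wb$ and whose $p\th$ letter equals $w_p$. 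Call such a $p$ a \emph{rigid} coordinate of $\wb$; so, for $\wb\neq\ob$, $\wb$ is join-irreducible exactly when it has a rigid coordinate, and then decreasing $\wb$ maximally at that coordinate produces its unique lower cover.

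For the ``if'' direction I exhibit a rigid coordinate (hence the unique lower cover) of each word in the list. If $\wb=\ab^{(i,j)}$ with $j\in[m]$, then coordinate $i$ is rigid: any valid $\vb\comporder\ab^{(i,j)}$ with $v_i=j$ satisfies $v_k\geq j$ for all $k<i$ by \eqref{it:mn_2} (note $j\leq m$), and also $v_k\leq j$, forcing $\vb=\ab^{(i,j)}$; the unique lower cover is $\ab^{(i,j)}$ with its $i\th$ letter decreased by $1$. If $\wb=\bb^{(i)}$ with $i\in[2,n]$, then any valid $\vb\comporder\bb^{(i)}$ has all letters $0$ except possibly the $i\th$, and an $i\th$ letter in $[1,m]$ would contradict \eqref{it:mn_2} since the first letter is $0$ and $i\geq2$; so $\vb\in\{\ob,\bb^{(i)}\}$, coordinate $i$ is rigid, and the unique lower cover is $\ob$.

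The heart of the proof is the converse: every $\wb\neq\ob$ outside the list has no rigid coordinate. First one records the shape of a general word of $\Words(m,n)$: if $z$ is the least position carrying a $0$ (set $z=n+1$ if there is none), then \eqref{it:mn_1} and \eqref{it:mn_2} force $w_1,\ldots,w_{z-1}$ to lie in $[1,m+1]$ with those among them that lie in $[1,m]$ weakly decreasing and $w_1\leq m$, while $w_{z+1},\ldots,w_n$ lie in $\{0,m+1\}$. \textbf{Case~A:} some letter after position $z$ equals $m+1$. If it is the only nonzero letter, then necessarily $z=1$ and $\wb=\bb^{(z')}$, where $z'\geq2$ is the position of that $m+1$. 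Otherwise, for each nonzero coordinate $p$ one produces a word $\vb\in\Words(m,n)$ with $\vb\comporder\wb$, $v_p=w_p$, and $\vb\neq\wb$, by lowering to $0$ some nonzero coordinate other than $p$ (a second $m+1$ after $z$, or letter $z-1$ when $p$ is the only $m+1$ after $z$); so no coordinate is rigid. \textbf{Case~B:} no letter after position $z$ is nonzero, so the nonzero letters form a prefix $w_1\ldots w_\ell$ of the shape above. If $w_1=\cdots=w_\ell$, then $w_1\in[1,m]$ and $\wb=\ab^{(\ell,w_1)}$. Otherwise I construct two words $\vb^{(1)},\vb^{(2)}\in\Words(m,n)$ strictly below $\wb$, each obtained from $\wb$ by lowering a single letter, in two distinct positions, so that $\vb^{(1)}\vee\vb^{(2)}=\wb$ (hence neither coordinate is rigid): $\vb^{(1)}$ lowers the last letter $w_\ell$ by as little as possible while staying valid (to $w_\ell-1$, or to $\min_{k<\ell}w_k$ when $w_\ell=m+1$), while $\vb^{(2)}$ lowers minimally one earlier letter, chosen as the last letter equal to $w_1$ when the $[1,m]$-letters of the prefix are not all equal, as the last $m+1$ of the prefix when that letter precedes position $\ell$, and as the letter in position $\ell-1$ otherwise.

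The step I expect to be most delicate is \textbf{Case~B}: because $m+1$'s may be interleaved with the weakly decreasing $[1,m]$-letters, the earlier letter lowered to form $\vb^{(2)}$ must be chosen with care so that the resulting word still satisfies \eqref{it:mn_2} and is distinct from $\vb^{(1)}$, which is what forces the small sub-case analysis above; checking $\vb^{(1)}\vee\vb^{(2)}=\wb$ is then immediate because the two modifications occur in different positions. The remaining points---that $\ab^{(i,j)}$ and $\bb^{(i)}$ do lie in $\Words(m,n)$, and that the words built in Case~A are valid---are routine consequences of \eqref{it:mn_1} and \eqref{it:mn_2}.
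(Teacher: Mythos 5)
Your proof is correct and takes essentially the same route as the paper: both reduce join-irreducibility to the existence of a coordinate that no strictly smaller word can match (equivalently, a unique lower cover), and both rule out every word outside the list by lowering single letters to produce strictly smaller words whose join recovers $\wb$. Your case analysis is organized by the shape of the word (the position of the first zero) rather than, as in the paper, by the value of the letter at which the unique lower cover differs, but the underlying constructions --- zeroing a trailing nonzero letter, lowering the last occurrence of the largest value in the prefix --- are the same.
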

\begin{proof}
    For all $i\in[n]$, $\ab^{(i,0)}=\ob$ is the bottom element of $\WordPoset(m,n)$ and therefore cannot be join-irreducible. 

    As $\bb^{(i)}$ has only one non-zero entry for all $i\in[2,n]$, it is clear that it is join-irreducible. In fact, \eqref{it:mn_2} implies $\ob\compcover\bb^{(i)}$.

    Likewise, if $i\in[n]$ and $j\in[m]$, then \eqref{it:mn_2} implies that only the $i\th$ letter of $\ab^{(i,j)}$ can be decreased so that one still obtains an $(m,n)$-word. This implies that $\ab^{(i,j)}$ covers a unique element, and is therefore join-irreducible.

    \medskip

    Conversely, suppose that $\vb=v_1v_2\ldots v_n$ is join-irreducible. This means that $\vb$ covers a unique element $\ub=u_1u_2\ldots u_n$. By construction, there exists a unique index $i\in[n]$ such that $u_i<v_i$ and $u_j=v_j$ for all $j\neq i$. This implies in particular that $v_i\neq 0$.

    (i) Assume that $v_i=m+1$. Suppose further that there exists $k\in[n]$, $k\neq i$, be the largest index such that $v_k>0$. Then, consider the word $\wb=w_1w_2\ldots w_n$ with $w_k=0$ and $w_j=v_j$ for $j\neq k$. Since $w_1=v_1<m+1$, \eqref{it:mn_1} is satisfied. If $w_\ell=s\notin\{0,m+1\}$, then $\ell<k$ and $v_\ell=s$ and by \eqref{it:mn_2}, $w_j=v_j>s$ for all $j<\ell$. This means that $\wb\in\Words(m,n)$ with $\wb\compless\vb$ but $\wb\not\comporder\ub$ which contradicts $\vb$ being join-irreducible. We conclude that when $v_i=m+1$, it must be the case that $\vb=\bb^{(i)}$.

    (ii) Assume that $v_i=s\notin\{0,m+1\}$. By \eqref{it:mn_2}, $v_j\geq s$ for all $j<i$. Suppose that there exists a largest index $k\in[i-1]$ such that $v_k>s$. Then, consider the word $\wb=w_1w_2\ldots w_n$ with $w_k=s$ and $w_j=v_j$ for all $j\neq k$. It is clear that $\wb\in\Words(m,n)$. Then $\wb\compless\vb$ and $\wb\not\comporder\ub$, which contradicts $\vb$ being join-irreducible. It follows that $v_1=v_2=\cdots=v_i=s$. Now suppose that there exists a largest index $k\in\{i+1,i+2,\ldots,n\}$ such that $v_k>0$. If we consider the word $\wb'=w'_1w'_2\ldots w'_n$ with $w'_k=0$ and $w'_j=v_j$ for all $j\neq k$, then analogously to (i) we obtain a contradiction to $\vb$ being join-irreducible. We conclude that when $v_i=s\notin\{0,m+1\}$, it must be the case that $\vb=\ab^{(i,s)}$.
\end{proof}

\begin{lemma}\label{lem:mn_long_chain}
    For $m\geq 0$ and $n>0$, $\WordPoset(m,n)$ contains a chain of length $(m+1)n-1$.
\end{lemma}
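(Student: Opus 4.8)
The plan is to write down an explicit chain of the claimed length by raising single letters, one unit at a time, in a left-to-right order chosen so that \eqref{it:mn_2} is never violated. Starting from the bottom element $\ob=00\ldots 0$, I first raise the first letter from $0$ to $m$ in unit steps, producing $\ob\compcover 10\ldots 0\compcover 20\ldots 0\compcover\cdots\compcover m0\ldots 0$; this contributes $m$ cover steps. Then, for $k=2,3,\ldots,n$ in increasing order, I raise the $k\th$ letter in unit steps from $0$ all the way up to $m+1$, while the letters in positions $1,2,\ldots,k-1$ are already frozen at their final values $m,m+1,\ldots,m+1$ and the letters in positions $k+1,\ldots,n$ are still $0$; each such $k$ contributes $m+1$ cover steps. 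The last word reached is $m(m+1)(m+1)\ldots(m+1)$, the top element of $\WordPoset(m,n)$.

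Next I would verify that every word occurring along this sequence lies in $\Words(m,n)$. Apart from the initial words $t0\ldots 0$ with $t\in[0,m]$ (while the first letter is being raised), the generic word has the shape $m,(m{+}1),\ldots,(m{+}1),t,0,\ldots,0$ with the letter $t$ in position $k$ for some $k\in[2,n]$ and $t\in[1,m+1]$. Condition \eqref{it:mn_1} is immediate since the first letter never exceeds $m$. For \eqref{it:mn_2}: the only positions carrying a value in $[1,m]$ are position $1$ (value $m$, which imposes nothing, as there is no earlier position) and possibly position $k$ when $t\in[1,m]$; in that case every letter in a position $j<k$ equals $m$ or $m+1$, hence is $\geq t$, so \eqref{it:mn_2} holds. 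This is the only point that genuinely needs checking, and it is exactly why the letters are processed from left to right and why each letter is pushed to $m+1$ before moving on to the next.

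Finally I would count. Consecutive words in the sequence differ in a single coordinate by exactly $1$, so each is strictly $\comporder$-below its successor (in fact a cover, since no word can lie strictly between two words that differ by one unit in one coordinate), and hence the sequence is a chain in $\WordPoset(m,n)$. The number of words in it is $1+m+(n-1)(m+1)=(m+1)n$, so its length is $(m+1)n-1$. The argument is uniform in the boundary cases $m=0$ (the first-letter phase is empty and the chain is a maximal chain of the boolean lattice) and $n=1$ (only the first-letter phase occurs, giving the $(m{+}1)$-chain). I do not expect a real obstacle: the whole content is the choice of increment order that keeps \eqref{it:mn_2} satisfied at every step.
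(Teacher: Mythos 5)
Your proposal is correct, and it takes a genuinely different route from the paper. The paper proves this lemma by induction on $n$, piggybacking on the interval-doubling construction of Theorem~\ref{thm:mn_doubling}: each of the $m+1$ doublings used to pass from $\WordPoset(m,n-1)$ to $\WordPoset(m,n)$ lengthens a bottom-to-top chain by one, so a chain of length $(m+1)(n-1)-1$ extends to one of length $(m+1)n-1$. You instead exhibit an explicit saturated chain: raise $w_1$ from $0$ to $m$, then for $k=2,\ldots,n$ raise $w_k$ from $0$ to $m+1$, and your verification of \eqref{it:mn_1} and \eqref{it:mn_2} for the intermediate words $m(m{+}1)\cdots(m{+}1)\,t\,0\cdots0$ is exactly the point that needs care and is handled correctly; the count $1+m+(n-1)(m+1)=(m+1)n$ elements is also right. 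Your argument is self-contained and more elementary (it does not depend on Theorem~\ref{thm:mn_doubling} at all, and it produces the chain concretely, which is potentially useful elsewhere, e.g.\ as a candidate left-modular chain), while the paper's argument is shorter given that the doubling machinery has already been established. A minor remark: the cover-relation claim in your last paragraph is true but unnecessary, since the lemma only asks for a chain of the stated length, not a saturated one.
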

\begin{proof}
    We prove this statement by induction on $n$. If $n=1$, then $\WordPoset(m,1)$ is itself a chain of length $m$ (because it consists of the $m+1$ words $0,1,\ldots,m$).

    Now let $n>1$. By Theorem~\ref{thm:mn_doubling}, we may obtain $\WordPoset(m,n)$ from $\WordPoset(m,n-1)$ by consecutively doubling $m+1$ intervals. This implies that we may extend a chain of length $k$ from bottom to top in $\WordPoset(m,n-1)$ to a chain of length $k+m+1$ from bottom to top in $\WordPoset(m,n)$. 
    
    By induction assumption, we can find a chain of length $(m+1)(n-1)-1$ from bottom to top in $\WordPoset(m,n-1)$ which can thus be extended to a chain of length $(m+1)(n-1)-1+m+1 = (m+1)n-1$ from bottom to top in $\WordPoset(m,n)$.
\end{proof}

We conclude with the proof of the remaining part of our main theorem.

\begin{theorem}\label{thm:mn_extremal}
    For $m,n\geq 0$, the lattice $\WordPoset(m,n)$ is extremal. 
\end{theorem}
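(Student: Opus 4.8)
The strategy is to read off $\bigl\lvert\JI(\WordPoset(m,n))\bigr\rvert$ and $\ell\bigl(\WordPoset(m,n)\bigr)$ from the two preceding lemmas, observe that they agree, and then transfer the equality to the meet-irreducibles via semidistributivity. First dispose of the trivial case $n=0$: here $\WordPoset(m,0)$ is the one-element lattice and $\bigl\lvert\JI\bigr\rvert=\ell=\bigl\lvert\MI\bigr\rvert=0$, so extremality is immediate. Assume from now on that $n>0$. By Lemma~\ref{lem:mn_irreducibles}, the join-irreducibles of $\WordPoset(m,n)$ are exactly the words $\ab^{(i,j)}$ with $i\in[n]$, $j\in[m]$, together with the words $\bb^{(i)}$ with $i\in[2,n]$. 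The first family has $mn$ members, the second has $n-1$ members, and the two families are disjoint since every $\bb^{(i)}$ has a letter equal to $m+1$ whereas no $\ab^{(i,j)}$ does. Hence $\bigl\lvert\JI(\WordPoset(m,n))\bigr\rvert=mn+(n-1)=(m+1)n-1$.

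Next I would use the standard fact that in any finite lattice $\Poset$ the length is bounded above by the number of join-irreducibles: choosing a maximal chain $\hat 0=x_0\lessdot x_1\lessdot\cdots\lessdot x_\ell=\hat 1$, for each $k$ there is a join-irreducible $j_k$ with $j_k\leq x_k$ and $j_k\not\leq x_{k-1}$ --- otherwise every join-irreducible below $x_k$ would already be below $x_{k-1}$, forcing $x_k\leq x_{k-1}$ --- and the elements $j_1,\dots,j_\ell$ are pairwise distinct, so $\ell(\Poset)\leq\bigl\lvert\JI(\Poset)\bigr\rvert$. Applying this to $\WordPoset(m,n)$ and combining with Lemma~\ref{lem:mn_long_chain}, which produces a chain of length $(m+1)n-1$, we obtain
\[
    (m+1)n-1 \;\leq\; \ell\bigl(\WordPoset(m,n)\bigr) \;\leq\; \bigl\lvert\JI(\WordPoset(m,n))\bigr\rvert \;=\; (m+1)n-1,
\]
and therefore $\ell\bigl(\WordPoset(m,n)\bigr)=\bigl\lvert\JI(\WordPoset(m,n))\bigr\rvert=(m+1)n-1$.

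Finally, Corollary~\ref{cor:mn_semidistributive} tells us that $\WordPoset(m,n)$ is semidistributive, so Lemma~\ref{lem:semidistributive_irreducibles} gives $\bigl\lvert\MI(\WordPoset(m,n))\bigr\rvert=\bigl\lvert\JI(\WordPoset(m,n))\bigr\rvert$. Putting the three computations together yields $\bigl\lvert\JI(\WordPoset(m,n))\bigr\rvert=\ell\bigl(\WordPoset(m,n)\bigr)=\bigl\lvert\MI(\WordPoset(m,n))\bigr\rvert$, which is exactly the definition of extremality.

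No serious obstacle remains once Lemmas~\ref{lem:mn_irreducibles} and~\ref{lem:mn_long_chain} are in hand; the argument is essentially bookkeeping. The one point worth flagging is that the dual inequality $\ell(\Poset)\leq\bigl\lvert\MI(\Poset)\bigr\rvert$ is not by itself enough to conclude, since it only bounds $\bigl\lvert\MI\bigr\rvert$ from below; semidistributivity (hence the interval-doubling construction of Theorem~\ref{thm:mn_doubling}) is what is genuinely used to bring the meet-irreducible count back down to $\ell$, sparing us an explicit enumeration of $\MI(\WordPoset(m,n))$.
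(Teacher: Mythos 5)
Your proposal is correct and follows essentially the same route as the paper: count the join-irreducibles via Lemma~\ref{lem:mn_irreducibles} to get $(m+1)n-1$, squeeze the length between the chain from Lemma~\ref{lem:mn_long_chain} and the general bound $\ell\leq\lvert\JI\rvert$, and invoke semidistributivity (Corollary~\ref{cor:mn_semidistributive} plus Lemma~\ref{lem:semidistributive_irreducibles}) to equate the meet-irreducible count. The only difference is that you spell out the standard inequality $\ell(\Poset)\leq\lvert\JI(\Poset)\rvert$, which the paper uses implicitly in its displayed chain of inequalities.
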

\begin{proof}
    If $n=0$, then $\WordPoset(m,0)$ is the singleton lattice, which is trivially extremal.

    So let $n>0$. By Corollary~\ref{cor:mn_semidistributive}, $\WordPoset(m,n)$ is semidistributive, which implies that $\bigl\lvert\JI\bigl(\WordPoset(m,n)\bigr)\bigr\rvert=\bigl\lvert\MI\bigl(\WordPoset(m,n)\bigr)\bigr\rvert$ by Lemma~\ref{lem:semidistributive_irreducibles}. Moreover, Lemma~\ref{lem:mn_irreducibles} implies that 
    \begin{displaymath}
        \bigl\lvert\JI\bigl(\WordPoset(m,n)\bigr)\bigr\rvert=(m+1)n-1.
    \end{displaymath}

    Lemma~\ref{lem:mn_long_chain} implies that $\ell\bigl(\WordPoset(m,n)\bigr)\geq (m+1)n-1$, which yields
    \begin{displaymath}
        (m+1)n-1 = \bigl\lvert\JI(\WordPoset(m,n)\bigr)\bigr\rvert \geq \ell\bigl(\WordPoset(m,n)\bigr)\geq (m+1)n-1, 
    \end{displaymath}
    and thus proves the theorem.
\end{proof}

\begin{corollary}\label{cor:mn_trim}
    For $m,n\geq 0$, the lattice $\WordPoset(m,n)$ is trim.
\end{corollary}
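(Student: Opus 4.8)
The plan is to combine the two structural results already in hand: by Corollary~\ref{cor:mn_semidistributive} the lattice $\WordPoset(m,n)$ is semidistributive (this is itself a consequence of interval-constructability, Theorem~\ref{thm:mn_doubling}, together with Theorem~\ref{thm:semidistributive_interval_constructible}), and by Theorem~\ref{thm:mn_extremal} it is extremal. Theorem~\ref{thm:semidistributive_extremal_trim} asserts precisely that a lattice which is simultaneously extremal and semidistributive must be trim, so the conclusion is immediate.

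Concretely, I would write: fix $m,n\geq 0$. By Corollary~\ref{cor:mn_semidistributive}, $\WordPoset(m,n)$ is semidistributive, and by Theorem~\ref{thm:mn_extremal}, it is extremal. Applying Theorem~\ref{thm:semidistributive_extremal_trim} to $\WordPoset(m,n)$ then yields that $\WordPoset(m,n)$ is trim, which is the claim.

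There is no real obstacle here; the corollary is a one-line deduction whose entire content has been front-loaded into the preceding theorems. The only thing worth a moment's care is the degenerate case $n=0$ (and the near-degenerate case $n=1$), but these are already handled inside the proofs of Theorems~\ref{thm:mn_doubling} and \ref{thm:mn_extremal} — the singleton lattice $\WordPoset(m,0)$ and the chain $\WordPoset(m,1)$ are trivially trim — so nothing additional needs to be said. Thus the proof is simply the invocation of Corollary~\ref{cor:mn_semidistributive}, Theorem~\ref{thm:mn_extremal}, and Theorem~\ref{thm:semidistributive_extremal_trim} in sequence.

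\begin{proof}
    By Corollary~\ref{cor:mn_semidistributive}, the lattice $\WordPoset(m,n)$ is semidistributive, and by Theorem~\ref{thm:mn_extremal} it is extremal. Theorem~\ref{thm:semidistributive_extremal_trim} therefore implies that $\WordPoset(m,n)$ is trim.
\end{proof}
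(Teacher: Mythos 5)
Your proof is correct and identical to the paper's: both deduce trimness by combining Corollary~\ref{cor:mn_semidistributive} and Theorem~\ref{thm:mn_extremal} with Theorem~\ref{thm:semidistributive_extremal_trim}. Nothing further is needed.
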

\begin{proof}
    This follows from Corollary~\ref{cor:mn_semidistributive} and Theorems~\ref{thm:semidistributive_extremal_trim} and \ref{thm:mn_extremal}.
\end{proof}

We finish this section by proving Theorem~\ref{thm:main_theorem}.

\begin{proof}[Proof of Theorem~\ref{thm:main_theorem}]
    This follows from Theorems~\ref{thm:mn_extremal} and \ref{thm:mn_doubling}.
\end{proof}

\section{Combinatorial Properties of $\WordPoset(m,n)$}
    \label{sec:mn_combinatorics}

\subsection{Cardinality}

A \defn{topless $(m,n)$-word} is an $(m,n)$-word that does not contain the letter $m+1$. By \eqref{it:mn_2}, every topless $(m,n)$-word is a weakly decreasing sequence of letters from $\{0,1,\ldots,m\}$; thus the number of topless $(m,n)$-words is $\binom{m+n}{n}$.

\begin{proposition}\label{prop:mn_words_cardinality}
    For $m,n\geq 0$, the cardinality of $\Words(m,n)$ is given by
    \begin{equation}\label{eq:mn_cardinality}
        \sum_{k=1}^{n}\binom{m+k}{k}\binom{n-1}{k-1}.
    \end{equation}
\end{proposition}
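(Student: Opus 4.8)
The plan is to count $(m,n)$-words by separating off the occurrences of the top letter $m+1$. First I would note that condition \eqref{it:mn_2} never uses $m+1$ as the letter ``$s$'', and that $m+1\geq s$ holds automatically for every $s\in[1,m]$; so inserting or deleting copies of $m+1$ interacts with \eqref{it:mn_2} only trivially. Hence an $(m,n)$-word $\wb$ is determined by the pair $\bigl(S_\wb,d(\wb)\bigr)$, where $S_\wb\defs\{i\colon w_i=m+1\}$ and $d(\wb)$ is the word obtained by deleting from $\wb$ the positions in $S_\wb$. By \eqref{it:mn_1} we have $S_\wb\subseteq[2,n]$, and by \eqref{it:mn_2} the word $d(\wb)$ is weakly decreasing on $\{0,1,\ldots,m\}$ and has length $n-\lvert S_\wb\rvert$.

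The one step that needs care is to verify that $\wb\mapsto\bigl(S_\wb,d(\wb)\bigr)$ is a bijection onto the set of all pairs $(S,\vb)$ with $S\subseteq[2,n]$ arbitrary and $\vb$ any weakly decreasing word on $\{0,\ldots,m\}$ of length $n-\lvert S\rvert$. The inverse map reinserts the letter $m+1$ in the positions of $S$ and fills the remaining positions, in increasing order, with the letters of $\vb$. One checks that \eqref{it:mn_1} holds since $1\notin S$, and that \eqref{it:mn_2} holds since a letter $s\in[1,m]$ at some position is preceded only by copies of $m+1$ (which exceed $s$) and by earlier letters of $\vb$ (which are $\geq s$ because $\vb$ is weakly decreasing). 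This is the heart of the argument; everything else is bookkeeping.

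Granting the bijection, the count is immediate: there are $\binom{n-1}{j}$ sets $S\subseteq[2,n]$ with $\lvert S\rvert=j$, and the weakly decreasing words of length $n-j$ on the $(m{+}1)$-letter alphabet $\{0,\ldots,m\}$ are in bijection with size-$(n-j)$ multisets from $m+1$ elements, so there are $\binom{m+n-j}{n-j}$ of them. Summing over $j=0,1,\ldots,n-1$ and reindexing by $k=n-j$ yields
\begin{equation*}
    \bigl\lvert\Words(m,n)\bigr\rvert=\sum_{j=0}^{n-1}\binom{n-1}{j}\binom{m+n-j}{n-j}=\sum_{k=1}^{n}\binom{n-1}{k-1}\binom{m+k}{k},
\end{equation*}
which is \eqref{eq:mn_cardinality}. (I would state this for $n\geq 1$; the case $n=0$ gives the one-element lattice and is trivial.)
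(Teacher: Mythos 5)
Your proof is correct and follows essentially the same decomposition as the paper: split off the letters equal to $m+1$ and observe that what remains is a weakly decreasing word on $\{0,\ldots,m\}$, counted by a binomial coefficient. The only cosmetic difference is that you record the positions of the $m+1$'s as a subset of $[2,n]$ of size $n-k$, whereas the paper records them as a weak composition of $n-k$ into $k$ parts; both give $\binom{n-1}{k-1}$.
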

\begin{proof}
    Every $(m,n)$-word $\wb$ can be written as $w_1 a^{(1)} w_2 a^{(2)}w_3 a^{(3)}\cdots a^{(k-1)}w_k a^{(k)}$ for some $k\in[n]$, where each $a^{(i)}$ is a possibly empty sequence of $m+1$'s and the sum of the lengths of all $a^{(i)}$'s is $n-k$. We can thus view $(a^{(1)},a^{(2)},\ldots,a^{(k)})$ as a weak composition of $n-k$ into exactly $k$ parts. Thus, the number of all $(m,n)$-words is
    \begin{displaymath}
        \sum_{k=1}^{n}\binom{m+k}{k}\binom{n-1}{k-1}
    \end{displaymath}
    as desired.
\end{proof}

\subsection{Canonical Join Representations}

\begin{lemma}
    Let $\ub,\vb\in\Words(m,n)$. The join of $\ub$ and $\vb$ in $\WordPoset(m,n)$ is obtained by taking the componentwise maximum of $\ub$ and $\vb$.
\end{lemma}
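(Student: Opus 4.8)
The plan is to exhibit the componentwise maximum directly as the least upper bound. Write $\wb=w_1w_2\ldots w_n$ for the word with $w_i\defs\max(u_i,v_i)$ for all $i\in[n]$. I would prove two things in turn: first, that $\wb$ is again an $(m,n)$-word, i.e.\ $\wb\in\Words(m,n)$; second, that $\wb$ is the least upper bound of $\ub$ and $\vb$ in $\WordPoset(m,n)$.

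For the first part, I would check conditions \eqref{it:mn_1} and \eqref{it:mn_2}. Condition \eqref{it:mn_1} is immediate: since $u_1\neq m+1$ and $v_1\neq m+1$ while all letters lie in $[0,m+1]$, both $u_1$ and $v_1$ are at most $m$, hence $w_1=\max(u_1,v_1)\leq m$. For \eqref{it:mn_2}, suppose $w_i=s$ for some $s\in[1,m]$. By definition of $\wb$ we have $u_i\leq s$ and $v_i\leq s$, with equality holding in at least one of the two cases; say $u_i=s$ (the case $v_i=s$ being symmetric). Then \eqref{it:mn_2} applied to $\ub$ gives $u_j\geq s$ for all $j<i$, whence $w_j=\max(u_j,v_j)\geq u_j\geq s$ for all $j<i$. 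Thus $\wb\in\Words(m,n)$.

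For the second part, $\ub\comporder\wb$ and $\vb\comporder\wb$ hold by construction, so $\wb$ is an upper bound of $\{\ub,\vb\}$. Conversely, if $\zb\in\Words(m,n)$ satisfies $\ub\comporder\zb$ and $\vb\comporder\zb$, then $u_i\leq z_i$ and $v_i\leq z_i$ for every $i\in[n]$, hence $w_i=\max(u_i,v_i)\leq z_i$, i.e.\ $\wb\comporder\zb$. Therefore $\wb$ is the least upper bound of $\ub$ and $\vb$, which, since $\WordPoset(m,n)$ is a lattice by Theorem~\ref{thm:mn_lattice}, is precisely $\ub\vee\vb$.

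The only step demanding any care is the verification of \eqref{it:mn_2} for $\wb$ — essentially the observation that the witness index making $w_i=s$ inherits its ``leftward domination'' from whichever of $\ub$, $\vb$ achieves the maximum there. Once closure of $\Words(m,n)$ under componentwise maximum is established, the identification with the join is a formality, because for a componentwise order any element-of-the-poset upper bound dominates the componentwise maximum automatically. So there is no real obstacle here.
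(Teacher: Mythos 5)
Your proposal is correct and follows essentially the same route as the paper's own proof: verify that the componentwise maximum satisfies \eqref{it:mn_1} and \eqref{it:mn_2} (using, for the latter, whichever of $\ub,\vb$ attains the maximum at the relevant position), and then observe that it is automatically the least upper bound under the componentwise order. No gaps.
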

\begin{proof}
    Let $\ub=u_1u_2\ldots u_n$ and $\vb=v_1v_2\ldots v_n$, and let $w_i=\max\{u_i,v_i\}$. By \eqref{it:mn_1}, we have $u_1\leq m$ and $v_1\leq m$, which implies $w_1\leq m$. Moreover, let $s\in[m]$ such that $w_i=s$. Then, without loss of generality we may assume that $u_i=s$, and it follows from \eqref{it:mn_2} that $u_j\geq s$ for all $j<i$. Consequently, $w_j=\max\{u_j,v_j\}\geq u_j\geq s$ for all $j<i$. It follows that $\wb=w_1w_2\ldots w_n\in\Words(m,n)$. 
    
    Moreover, it follows from the definition of the $w_i$ that $\ub\comporder\wb$ and $\vb\comporder\wb$ and that $\wb\comporder\wb'$ for all $\wb'\in\Words(m,n)$ with $\ub\comporder\wb'$ and $\vb\comporder\wb'$. This proves the claim.
\end{proof}

For $\vb\in\Words(m,n)$, we define
\begin{align*}
    \indeg(\vb) & \defs \bigl\lvert\{\ub\in\Words(m,n)\colon \ub\compcover\vb\}\bigr\rvert,\\
    \tops(\vb) & \defs \bigl\lvert\{i\in[2,n]\colon v_i=m+1\}\bigr\rvert.
\end{align*}

The \defn{support} of $\vb$ is the set $\Supp(\vb)\defs\{v_j\colon j\in[n]\;\text{and}\;v_j\in[m]\}$, \ie the set of all letters appearing in $\vb$ that are different from $0$ and $m+1$. 

\begin{lemma}\label{lem:mn_indegree}
    For $\vb=v_1v_2\ldots v_n\in\Words(m,n)$, we have 
    \begin{displaymath}
        \indeg(\vb)=\tops(\vb) + \bigl\lvert\Supp(\vb)\bigr\rvert.
    \end{displaymath}
\end{lemma}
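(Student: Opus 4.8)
The plan is to count the lower covers of $\vb$ in $\WordPoset(m,n)$ one index at a time. As a first step I would show that if $\ub\compcover\vb$, then $\ub$ and $\vb$ differ in exactly one letter. Given any $\ub\compless\vb$, let $i$ be the largest index with $u_i<v_i$ and let $\wb$ be obtained from $\vb$ by replacing $v_i$ with $u_i$. Then $\ub\comporder\wb\compless\vb$ holds componentwise, so the only thing to check is $\wb\in\Words(m,n)$: condition \eqref{it:mn_1} is immediate (if $i=1$ then $w_1=u_1<v_1\leq m$, otherwise $w_1=v_1$), and for \eqref{it:mn_2} the only delicate case is a letter $w_k=v_k=s\in[m]$ with $k>i$; since $u_k=v_k=s$ and $\ub\in\Words(m,n)$, condition \eqref{it:mn_2} applied to $\ub$ forces $u_i\geq s$, hence $w_i=u_i\geq s$. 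Thus $\ub\leq\wb<\vb$, and $\ub\compcover\vb$ forces $\ub=\wb$.

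Next I would fix an index $i$ and describe how far $v_i$ may be decreased. Reading off \eqref{it:mn_1} and \eqref{it:mn_2}, replacing $v_i$ by a smaller letter $u$ yields an element of $\Words(m,n)$ if and only if $u\geq L_i\defs\max\bigl(\{v_k\colon i<k\leq n,\ v_k\in[m]\}\cup\{0\}\bigr)$ and, whenever $u\in[m]$, also $u\leq M_i\defs\min\{v_\ell\colon\ell<i\}$ (with the convention $M_1\defs m+1$). Consequently, among the words below $\vb$ differing from it only at position $i$ there is a componentwise largest one whenever any exists, and by the first step this largest word is then the unique lower cover of $\vb$ differing from $\vb$ at position $i$. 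Therefore $\indeg(\vb)$ equals the number of indices $i\in[n]$ for which some strict decrease of $v_i$ keeps the word in $\Words(m,n)$.

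It then remains to identify these indices by a case distinction on $v_i$. If $v_i=0$, no decrease is possible. If $v_i=m+1$ — which forces $i\in[2,n]$ by \eqref{it:mn_1} — then a short argument using \eqref{it:mn_2} shows $L_i\leq M_i$ and $L_i\leq m$, so $u=L_i$ is an admissible strict decrease; hence each such position contributes $1$, for a total of $\tops(\vb)$. If $v_i=s\in[m]$, then \eqref{it:mn_2} gives $L_i\leq s\leq M_i$, and an admissible $u<s$ exists precisely when $L_i<s$, which — again by \eqref{it:mn_2} — happens exactly when the letter $s$ occurs at no index larger than $i$, i.e. when $i$ is the last occurrence of $s$ in $\vb$. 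Summing one contribution for the last occurrence of each distinct letter of $\Supp(\vb)$ yields $\bigl\lvert\Supp(\vb)\bigr\rvert$, and adding the $\tops(\vb)$ contributions from the $(m{+}1)$-letters gives the claimed identity.

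I expect the main obstacle to be the bookkeeping in the second and third steps: pinning down how far a letter may be decreased means balancing the lower bound $L_i$ forced by the letters to the right of position $i$ against the upper bound $M_i$ forced by the letters to its left, and establishing $L_i\leq M_i$ (so that an $(m{+}1)$-letter can always be decreased) together with the ``last occurrence'' criterion for letters in $[m]$; both reduce to careful, repeated applications of \eqref{it:mn_2}.
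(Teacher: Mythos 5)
Your proposal is correct and follows essentially the same route as the paper: lower covers of $\vb$ correspond bijectively to positions whose letter can be strictly decreased within $\Words(m,n)$, and these are exactly the $(m{+}1)$-positions together with the last occurrence of each value in $\Supp(\vb)$. Your write-up is in fact more careful than the paper's --- the preliminary step that a cover differs in exactly one letter, the one-cover-per-position argument via the largest admissible decrease, and the explicit bounds $L_i\leq M_i$ make rigorous what the paper's proof leaves implicit.
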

\begin{proof}
    Suppose that there exists $i\in[n]$ such that $v_i=m+1$. Then, by \eqref{it:mn_1}, $i>1$. Let $s=\max\{v_j\colon 1\leq j<i\;\text{and}\;v_j<m+1\}$, and consider the word $\ub=u_1u_2\ldots u_n$ with $u_i=s$ and $u_j=v_j$ for $j\neq i$. Then, by design $u_i=s\geq u_j=v_j$ for all $j<i$ so that $\ub\in\Words(m,n)$ and $\ub\comporder\vb$. If there was $\ub'=u'_1u'_2\ldots u'_n\in\Words(m,n)$ with $\ub\compless\ub'\compless\vb$ then, necessarily $u_j=u'_j=v_j$ for all $j\neq i$ and $u_i<u'_i<v_j$. By design, however, there exists some $k<j$ such that $s=u_k=u'_k$ so that $u'_i>u_i=s=u'_k$ which contradicts \eqref{it:mn_2}, and we conclude $\ub\compcover\vb$.

    Now pick $s\in\Supp(\vb)$. Let $i$ be the maximum index such that $v_i=s$, and consider the word $\ub=u_1u_2\ldots u_n$ with $u_i=s-1$ and $u_j=v_j$ for all $j\neq i$. If $\ub\in\Words(m,n)$, then it is immediate that $\ub\compcover\vb$.  But since $\vb\in\Words(m,n)$, then by \eqref{it:mn_2} it follows that $v_j\geq v_i=s$ for all $j<i$ and therefore $u_j=v_j\geq s>u_i$ for all $j<i$. 

    Conversely, if $i$ is a non-maximum index such that $v_i=s$, \ie there exists $k>i$ such that $v_k=s$. Then, if we consider the word $\ub=u_1u_2\ldots u_n$ with $u_i<v_i$ and $u_j=v_j$, then $u_i<u_k$ but $i<k$ contradicting \eqref{it:mn_2}. 

    Therefore, every entry in the support of $\vb$ and every entry in $\vb$ equal to $m+1$ contribute an element covered by $\vb$. Since every element covered by $\vb$ is obtained by reducing some letter of $\vb$, this concludes the proof.
\end{proof}

\begin{corollary}\label{cor:refined_enumeration}
    The number of $\wb\in\Words(m,n)$ with $\indeg(\wb)=a$ and $\lvert\Supp(\wb)\rvert=b$ is 
    \begin{displaymath}
        \binom{m}{b}\binom{n-a+b}{b}\binom{n-1}{n-a+b-1}.
    \end{displaymath}
\end{corollary}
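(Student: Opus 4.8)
The plan is to set up a direct bijective/encoding argument for words $\wb\in\Words(m,n)$ parametrized by the two statistics $\indeg(\wb)=a$ and $\lvert\Supp(\wb)\rvert=b$, using Lemma~\ref{lem:mn_indegree} to translate the condition on $\indeg$ into a condition on $\tops(\wb)$. Indeed, by Lemma~\ref{lem:mn_indegree} we have $\indeg(\wb)=\tops(\wb)+\lvert\Supp(\wb)\rvert$, so requiring $\indeg(\wb)=a$ and $\lvert\Supp(\wb)\rvert=b$ is equivalent to requiring $\lvert\Supp(\wb)\rvert=b$ and $\tops(\wb)=a-b$. The task is thus to count $(m,n)$-words whose set of ``middle'' letters (those in $[m]$) has size exactly $b$ and which contain exactly $a-b$ occurrences of the letter $m+1$ among positions $2,\dots,n$ (equivalently, exactly $a-b$ occurrences of $m+1$ in total, since $w_1\neq m+1$).

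The key structural fact I would use is the one already exploited in the proof of Proposition~\ref{prop:mn_words_cardinality}: by \eqref{it:mn_2}, every $(m,n)$-word decomposes uniquely as $w_1 a^{(1)} w_2 a^{(2)}\cdots w_k a^{(k)}$ where $w_1\geq w_2\geq\cdots\geq w_k$ is a weakly decreasing sequence of letters from $[0,m]$ and each $a^{(i)}$ is a (possibly empty) block of $m+1$'s. First I would observe that $\tops(\wb)=a-b$ pins down the total number of $m+1$'s, namely $n-k$ occurrences means $k=n-(a-b)$; so $k$ is determined. The remaining data splits into two independent choices. The ``block placement'' data: a weak composition $(a^{(1)},\dots,a^{(k)})$ of $n-k = a-b$ into exactly $k=n-a+b$ parts, of which there are $\binom{(a-b)+(n-a+b)-1}{(n-a+b)-1}=\binom{n-1}{n-a+b-1}$; this accounts for the third binomial factor. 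The ``letter content'' data: a weakly decreasing word $w_1\geq\cdots\geq w_k$ on the alphabet $[0,m]$ whose set of values \emph{in $[m]$} has size exactly $b$ — equivalently a multiset of size $k$ from $\{0,1,\dots,m\}$ using exactly $b$ distinct values from $\{1,\dots,m\}$.

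To count the letter-content data I would choose the $b$ distinct nonzero values in $\binom{m}{b}$ ways, and then count weakly decreasing words of length $k=n-a+b$ on these $b$ chosen values together with the optional value $0$, using each of the $b$ nonzero chosen values at least once. Writing $w_i$'s as a multiset, this is the number of multisets of size $k$ from a $(b+1)$-element alphabet that use $b$ prescribed elements at least once; subtracting the $b$ forced occurrences, it is the number of multisets of size $k-b=n-a$ from an alphabet of size $b+1$ (the $b$ nonzero values plus $0$), which is $\binom{(n-a)+(b+1)-1}{(b+1)-1}=\binom{n-a+b}{b}$. This gives the first two binomial factors $\binom{m}{b}\binom{n-a+b}{b}$, and multiplying by the block-placement count $\binom{n-1}{n-a+b-1}$ yields the claimed formula.

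The main obstacle — really the only place care is needed — is the bookkeeping with edge cases and the precise correspondence between ``support size exactly $b$'' and the stars-and-bars count: one must make sure that requiring the $b$ chosen nonzero values to each appear \emph{at least} once (so that the support is exactly, not merely at most, the chosen set) is correctly handled by the ``reserve one of each, then freely distribute'' trick, and that the degenerate cases ($b=0$, or $a=b$ so there are no $m+1$'s, or $k=n$, or $n-a=0$) all produce the right binomial coefficients (including the convention $\binom{r}{-1}=0$ when $a<b$ or $k=0$, matching $\indeg\geq 1$ for non-bottom elements). I would also note in passing that summing the stated expression over all valid $a,b$ recovers \eqref{eq:mn_cardinality}, which serves as a useful sanity check on the formula.
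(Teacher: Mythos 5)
Your proposal is correct and follows essentially the same route as the paper's proof: both translate $\indeg(\wb)=a$ into $\tops(\wb)=a-b$ via Lemma~\ref{lem:mn_indegree}, use the block decomposition from Proposition~\ref{prop:mn_words_cardinality} to place the $a-b$ letters equal to $m+1$ (giving $\binom{n-1}{n-a+b-1}$), choose the support in $\binom{m}{b}$ ways, and count the weakly decreasing topless part of length $n-a+b$ with exactly that support as $\binom{n-a+b}{b}$. The only cosmetic difference is how that middle factor is derived --- you reserve one copy of each support value and apply stars and bars, while the paper splits into compositions into $b$ or $b+1$ positive parts and sums $\binom{n-a+b-1}{b}+\binom{n-a+b-1}{b-1}$ --- which are equivalent computations.
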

\begin{proof}
    If $\lvert\Supp(\wb)\rvert=b$, then $\wb$ contains $a-b$ letters equal to $m+1$. This means that the topless $(m,n)$-word obtained from $\wb$ by deleting all letters equal to $m+1$ has $n-a+b$ letters and comprises a weakly decreasing sequence with $b$ different letters and possibly some zeroes at the end. In other words, it can be regarded as a (descending) staircase of total length $n-a+b$ and steps (of length $\geq 1$) at heights $s\in\Supp(\wb)\cup\{0\}$. The lengths of these steps form a composition of $n-a+b$ into $b+1$ parts (if $\wb$ has a letter equal to $0$) or into $b$ parts (if $\wb$ has no letter equal to $0$). Therefore, for a given support set of size $b$, we can form $\binom{n-a+b-1}{b}+\binom{n-a+b-1}{b-1}=\binom{n-a+b}{b}$ of these staircases, and can augment each of them by inserting $a-b$ letters equal to $m+1$ as described in the proof of Proposition~\ref{prop:mn_words_cardinality}. Since we can choose $\binom{m}{b}$ such support sets, we get the formula from the statement.
\end{proof}

\begin{corollary}\label{cor:in_degree_enumeration}
    The number of $\wb\in\Words(m,n)$ with $\indeg(\wb)=a$ is
    \begin{displaymath}
        \sum_{b=0}^{a}\binom{m}{b}\binom{n-a+b}{n-a}\binom{n-1}{a-b}.
    \end{displaymath}
\end{corollary}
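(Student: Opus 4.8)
The plan is simply to sum the refined count from Corollary~\ref{cor:refined_enumeration} over all admissible values of the support size $b$. Since $\indeg(\wb) = a$ forces $\lvert\Supp(\wb)\rvert = b$ to satisfy $b \le a$ (because the $a - b$ letters equal to $m+1$ must be nonnegative in number) and also $b \ge 0$, the total count of $\wb\in\Words(m,n)$ with $\indeg(\wb) = a$ is
\begin{displaymath}
    \sum_{b=0}^{a}\binom{m}{b}\binom{n-a+b}{b}\binom{n-1}{n-a+b-1}.
\end{displaymath}
So the only work is to rewrite the two binomial factors $\binom{n-a+b}{b}$ and $\binom{n-1}{n-a+b-1}$ into the form appearing in the statement.

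For the first factor, I would use the symmetry $\binom{n-a+b}{b} = \binom{n-a+b}{(n-a+b)-b} = \binom{n-a+b}{n-a}$, which is valid since $n-a+b \ge 0$ whenever the term is nonzero (if $n-a+b < 0$ the original binomial vanishes, and one checks the reindexed one is being multiplied by a zero factor anyway, so the identity of the two sums is unaffected). For the second factor, I would apply the symmetry $\binom{n-1}{n-a+b-1} = \binom{n-1}{(n-1)-(n-a+b-1)} = \binom{n-1}{a-b}$. Substituting both rewrites termwise into the displayed sum yields exactly
\begin{displaymath}
    \sum_{b=0}^{a}\binom{m}{b}\binom{n-a+b}{n-a}\binom{n-1}{a-b},
\end{displaymath}
as claimed.

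This is a routine consequence of Corollary~\ref{cor:refined_enumeration} together with two applications of the standard identity $\binom{r}{s} = \binom{r}{r-s}$; there is no real obstacle. The only point requiring a moment's care is the range of summation: one should note that terms with $b > m$ contribute $0$ via $\binom{m}{b}$, and terms with $b > a$ or with $n - a + b < 0$ likewise contribute $0$, so extending or truncating the index set to $0 \le b \le a$ is harmless and the two forms of the sum agree term by term.
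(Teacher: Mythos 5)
Your proposal is correct and matches the paper's intended argument: the paper states this corollary without proof precisely because it follows by summing the formula of Corollary~\ref{cor:refined_enumeration} over $b$ and applying the symmetry $\binom{r}{s}=\binom{r}{r-s}$ to both factors, exactly as you do. Your extra care about the summation range and vanishing terms is fine but not needed beyond what you wrote.
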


Computer experiments suggest the following formula equivalent to the sum in Corollary~\ref{cor:in_degree_enumeration}.

\begin{conjecture}
    The number of $\wb\in\Words(m,n)$ with $\indeg(\wb)=a$ is
    \begin{displaymath}
        \binom{m+a}{a}\binom{n}{a} - \binom{m+a-1}{m}\binom{n-1}{a-1}.
    \end{displaymath}
\end{conjecture}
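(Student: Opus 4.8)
The plan is to show that the claimed closed form equals the sum already obtained in Corollary~\ref{cor:in_degree_enumeration}, thereby reducing the conjecture to one elementary binomial identity. Reindexing that sum by $c=a-b$ and using $\binom{n-a+b}{n-a}=\binom{n-c}{n-a}$, it suffices to prove
\begin{displaymath}
    \sum_{c=0}^{a}\binom{m}{a-c}\binom{n-c}{n-a}\binom{n-1}{c}
    = \binom{m+a}{a}\binom{n}{a} - \binom{m+a-1}{m}\binom{n-1}{a-1}.
\end{displaymath}

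I would prove this by coefficient extraction in formal power series. Write $\binom{m}{a-c}=[x^{a}]\,x^{c}(1+x)^{m}$, $\binom{n-c}{n-a}=[z^{n-a}]\,(1+z)^{n}(1+z)^{-c}$, and $\binom{n-1}{c}=[y^{c}](1+y)^{n-1}$; the point of the first two rewritings is that all $c$-dependence has been pushed into the monomials $x^{c}$ and $(1+z)^{-c}$. Substituting and summing over $c\geq 0$ (terms with $c>a$ contribute nothing to the coefficient of $x^{a}$), the inner sum collapses by the binomial theorem to $\bigl(1+x/(1+z)\bigr)^{n-1}$, so the left-hand side equals
\begin{displaymath}
    \bigl[x^{a}z^{n-a}\bigr]\;(1+x)^{m}(1+z)(1+x+z)^{n-1}.
\end{displaymath}
Now $(1+z)=(1+x+z)-x$, so this splits as $\bigl[x^{a}z^{n-a}\bigr](1+x)^{m}(1+x+z)^{n}$ minus $\bigl[x^{a}z^{n-a}\bigr]\,x(1+x)^{m}(1+x+z)^{n-1}$. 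Expanding each power of $(1+x+z)$ as a polynomial in $z$ (with polynomial coefficients in $x$) and reading off the coefficient of $z^{n-a}$ leaves $\binom{n}{a}(1+x)^{m+a}$ in the first term and $\binom{n-1}{a-1}\,x(1+x)^{m+a-1}$ in the second; extracting the coefficient of $x^{a}$ then gives $\binom{n}{a}\binom{m+a}{a}$ and $\binom{n-1}{a-1}\binom{m+a-1}{m}$ respectively, and subtracting yields the identity.

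I do not anticipate a genuine obstacle: once the rewriting is set up, the computation is mechanical, and the only points needing care are the degenerate ranges---for instance $a=0$, where the second term vanishes on both sides, or $a$ close to $n$---all handled by the standard conventions $\binom{p}{q}=0$ for $q<0$ or $q>p\geq0$. If one prefers to avoid generating functions, the same identity can be verified by Zeilberger's algorithm, or by two applications of Vandermonde's identity after a trinomial revision; I would nonetheless present the coefficient-extraction argument above, since it is the shortest and makes transparent where the two terms of the closed form come from.
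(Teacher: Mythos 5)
Your argument is correct, and the key point of comparison is that the paper has \emph{no} proof of this statement at all: it is stated as a conjecture supported only by computer experiments, with Corollary~\ref{cor:in_degree_enumeration} (itself an immediate summation of Corollary~\ref{cor:refined_enumeration}) supplying the sum that you take as your starting point. Your reduction of the conjecture to the single binomial identity
\begin{displaymath}
    \sum_{c\geq 0}\binom{m}{a-c}\binom{n-c}{n-a}\binom{n-1}{c}
    \;=\;\binom{m+a}{a}\binom{n}{a}-\binom{m+a-1}{m}\binom{n-1}{a-1}
\end{displaymath}
is exactly the right move, and the coefficient-extraction computation goes through as you describe. One small simplification: there is no need to introduce $(1+z)^{-c}$ and a separate variable $y$ at all, since the summand is already $[x^{a}z^{n-a}]$ of $\binom{n-1}{c}\,x^{c}(1+x)^{m}(1+z)^{n-c}$, and the binomial theorem applied directly to $\sum_{c}\binom{n-1}{c}x^{c}(1+z)^{(n-1)-c}=(1+x+z)^{n-1}$ gives the polynomial identity $(1+x)^{m}(1+z)(1+x+z)^{n-1}$ without ever leaving the polynomial ring; the split $(1+z)=(1+x+z)-x$ then produces precisely the two terms of the closed form, with $[z^{n-a}](1+x+z)^{n}=\binom{n}{a}(1+x)^{a}$ and $[z^{n-a}](1+x+z)^{n-1}=\binom{n-1}{a-1}(1+x)^{a-1}$ yielding $\binom{n}{a}\binom{m+a}{a}$ and $\binom{n-1}{a-1}\binom{m+a-1}{m}$ respectively. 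I checked the identity numerically in several cases and the degenerate ranges $a=0$ and $a=n$ behave as you claim under the usual conventions. In short, your proposal does not merely reprove a known result by a different route; granting Corollary~\ref{cor:in_degree_enumeration}, it settles the conjecture and upgrades it to a theorem.
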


\begin{proposition}\label{prop:mn_join_representation}
    The canonical join representation of $\wb=w_1w_2\ldots w_n\in\Words(m,n)$ is 
    \begin{equation}\label{eq:mn_canonical_join_rep}
        \Can(\wb) \defs \Bigl\{\ab^{(i,j)}\colon i=\max\{\ell\colon w_\ell=j\in\Supp(\wb)\}\Bigr\} \uplus \Bigl\{\bb^{(i)}\colon w_i=m+1\Bigr\}.
    \end{equation}
\end{proposition}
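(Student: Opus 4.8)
The plan is to check \eqref{eq:mn_canonical_join_rep} directly against the definition of the canonical join representation, using only that joins in $\WordPoset(m,n)$ are computed by componentwise maxima (the lemma just proved) and the classification of join-irreducibles in Lemma~\ref{lem:mn_irreducibles}. Write $A$ for the claimed right-hand side of \eqref{eq:mn_canonical_join_rep}, and for $j\in\Supp(\wb)$ put $i_j\defs\max\{\ell\colon w_\ell=j\}$, so that the ``$\ab$-part'' of $A$ is $\{\ab^{(i_j,j)}\colon j\in\Supp(\wb)\}$ and its ``$\bb$-part'' is $\{\bb^{(i)}\colon w_i=m+1\}$; by \eqref{it:mn_1} every such $i$ lies in $[2,n]$, so $A$ really is a set of join-irreducible words, and its elements are pairwise distinct. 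It then suffices to prove: (i) the componentwise maximum of the words of $A$ equals $\wb$, so that $A$ is a join representation of $\wb$; and (ii) for every join representation $B$ of $\wb$, each word of $A$ lies $\comporder$-below some element of $B$, so that $\Words(m,n)_{\comporder A}\subseteq\Words(m,n)_{\comporder B}$. Together these say that $\Words(m,n)_{\comporder A}$ is the least element of $\{\Words(m,n)_{\comporder B}\colon B\text{ a join representation of }\wb\}$, which is precisely the assertion $A=\Can(\wb)$ (and, as a bonus, reproves the existence of $\Can(\wb)$).

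For (i) I would work one position at a time. Fix $\ell\in[n]$. If $w_\ell=m+1$ then $\bb^{(\ell)}\in A$ already supplies the maximal letter $m+1$ at position $\ell$. If $w_\ell=s\in[m]$ then $s\in\Supp(\wb)$ and $\ell\leq i_s$, so $\ab^{(i_s,s)}\in A$ supplies the letter $s$ there; the point to check is that nothing in $A$ supplies more — each $\bb^{(i)}$ is zero off its unique nonzero coordinate (which carries $m+1\neq s$), and for $j\in\Supp(\wb)$ with $j>s$ one gets $i_j<\ell$ from \eqref{it:mn_2} (an occurrence of $j$ at an index $\geq\ell$ would force $w_\ell\geq j>s$), so $\ab^{(i_j,j)}$ is zero at $\ell$. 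Finally, if $w_\ell=0$ then the same use of \eqref{it:mn_2} gives $i_j<\ell$ for all $j\in\Supp(\wb)$, so every word of $A$ is zero at $\ell$. Hence the componentwise maximum of $A$ is exactly $\wb$.

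For (ii), let $B$ be a join representation of $\wb$. Since $\bigvee B$ is the componentwise maximum of $B$, for each $j\in\Supp(\wb)$ there is $\bb\in B$ with $b_{i_j}=w_{i_j}=j$; as $j\in[m]$ and $\bb\in\Words(m,n)$, \eqref{it:mn_2} forces $b_\ell\geq j$ for all $\ell<i_j$, and since $\ab^{(i_j,j)}$ equals $j$ on $\{1,\dots,i_j\}$ and $0$ elsewhere we conclude $\ab^{(i_j,j)}\comporder\bb$. Likewise, for each $i$ with $w_i=m+1$ there is $\bb\in B$ with $b_i=m+1$, whence $\bb^{(i)}\comporder\bb$. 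This gives (ii), completing the argument. As sanity checks, $|A|=|\Supp(\wb)|+\tops(\wb)=\indeg(\wb)$ agrees with Lemma~\ref{lem:mn_indegree}, and the elements of $A$ are exactly the join-irreducibles classified in Lemma~\ref{lem:mn_irreducibles}.

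The only step I expect to require genuine care is (i), and specifically the claim that an $\ab^{(i_j,j)}$ with $j$ large cannot overshoot at a position where $\wb$ carries a smaller letter; this is exactly where \eqref{it:mn_2} does the essential work. If a more structural proof were preferred, one could instead invoke the standard bijection, valid in any semidistributive lattice, between the lower covers of $\wb$ and the elements of $\Can(\wb)$, and identify it with the explicit enumeration of the lower covers of $\wb$ carried out in the proof of Lemma~\ref{lem:mn_indegree}.
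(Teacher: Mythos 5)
Your proof is correct, but it takes a genuinely different route from the paper's. The paper leans on machinery already built: it notes that the proposed set has cardinality $\indeg(\wb)$ (matching, via Lemma~\ref{lem:mn_indegree} and the bijection between canonical joinands and lower covers in a semidistributive lattice, the size $\Can(\wb)$ must have), checks that it joins to $\wb$, and then rules out replacing a single joinand $\ab^{(i,j)}$ by a strictly smaller join-irreducible. You instead verify the definition head-on: (i) the componentwise maximum of your set $A$ is $\wb$, and (ii) every element of $A$ lies below some element of an arbitrary join representation $B$, so the ideal generated by $A$ is the minimum among all ideals generated by join representations. Step (ii) is the substantive difference --- a global minimality statement quantified over all $B$, rather than the paper's local ``no joinand can be lowered'' check --- and it buys you a self-contained argument that needs neither semidistributivity, nor the prior existence of $\Can(\wb)$, nor the covers--joinands correspondence. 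Both of your uses of \eqref{it:mn_2} (that $b_{i_j}=j$ forces $b_\ell\geq j$ for $\ell<i_j$, and that $i_j<\ell$ whenever $w_\ell<j$) are exactly right and are where the real work happens. One small point to make explicit: since several join representations can generate the same ideal, to conclude that $A$ itself (rather than a proper subset) is the canonical join representation you should note that $A$ is an antichain. This is immediate: $\ab^{(i,j)}\comporder\ab^{(i',j')}$ forces $i\leq i'$ and $j\leq j'$, whereas $j<j'$ in $\Supp(\wb)$ forces $i_j>i_{j'}$ by \eqref{it:mn_2}; and each $\bb^{(i)}$ is an atom with no letter of any $\ab^{(i_j,j)}$ reaching $m+1$. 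With that one line added, your argument is complete.
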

\begin{proof}
    As described in the proof of Lemma~\ref{lem:mn_indegree}, the positions $i\in[n]$ with $w_i=m+1$ or $i=\max\{\ell\colon w_\ell=s\in\Supp(\wb)\bigr\}$ each correspond to an element covered by $\wb$, so that the set in \eqref{eq:mn_canonical_join_rep} has the correct cardinality. Moreover, it is quickly verified that the join over \eqref{eq:mn_canonical_join_rep} is indeed $\wb$. 

    Assume that there exists some join-irreducible element $\ub'\in\Words(m,n)$ such that $\ub'\compless\ub$ for some $\ub\in\Can(\wb)$ and 
    \begin{equation}\label{eq:join_rep_replacement}
        \wb=\bigvee\bigl(\Can(\wb)\setminus\{\ub\}\bigr)\cup\{\ub'\}.
    \end{equation}
    Since each $\bb^{(i)}$ is an atom, we conclude that $\ub=\ab^{(i,j)}$ for some $i,j$. Then, the $i\th$ letter of $\ub'$ is strictly smaller than $j$. Since $w_i=j$, and no other word in $\Can(\wb)$ has $j$ as its $i\th$ letter, we obtain a contradiction to \eqref{eq:join_rep_replacement}. This concludes the proof.
\end{proof}

\begin{example}
    Let $\wb=474337720\in\Words(6,9)$. We have $w_2=w_6=w_7=7$ and $\Supp(\wb)=\{2,3,4\}$, where $\max\{\ell\colon w_\ell=4\}=3$, $\max\{\ell\colon w_\ell=3\}=5$ and $\max\{\ell\colon w_\ell=2\}=8$. Thus, the canonical join representation of $\wb$ is 
    \begin{multline*}
        \Bigl\{\ab^{(8,2)}, \ab^{(5,3)}, \ab^{(3,4)}, \bb^{(2)}, \bb^{(6)}, \bb^{(7)}\Bigr\}\\
            = \Bigl\{222222220, 333330000, 444000000, 070000000, 000007000, 000000700\Bigr\}.
    \end{multline*}
\end{example}

\begin{lemma}\label{lem:mn_irreducible_poset}
    For $m,n\geq 0$, the induced subposet of join-irreducibles of $\WordPoset(m,n)$ is isomorphic to 
    \begin{displaymath}
        (\mathbf{m}\times\mathbf{n})\uplus\underbrace{\mathbf{1}\uplus\mathbf{1}\uplus\cdots\uplus\mathbf{1}}_{n-1\;\text{times}}.
    \end{displaymath}
\end{lemma}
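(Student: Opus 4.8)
The plan is to read the answer straight off Lemma~\ref{lem:mn_irreducibles}: that lemma already tells us exactly which words are join-irreducible, namely the $\ab^{(i,j)}$ with $i\in[n]$, $j\in[m]$, together with the $\bb^{(i)}$ with $i\in[2,n]$. So the only thing left is to determine the componentwise order relations among these $(m+1)n-1$ words, which I would simply inspect letter by letter.

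First I would dispatch the $\bb^{(i)}$. The proof of Lemma~\ref{lem:mn_irreducibles} already records that each $\bb^{(i)}$ is an atom (in fact $\ob\compcover\bb^{(i)}$), so the $n-1$ words $\bb^{(i)}$ form an antichain among themselves. I would then observe that each $\bb^{(i)}$ is incomparable with every $\ab^{(k,j)}$: we cannot have $\ab^{(k,j)}\comporder\bb^{(i)}$ because $\ab^{(k,j)}$ starts with the letter $j\geq 1$ while $\bb^{(i)}$ starts with $0$ (as $i\geq 2$), and we cannot have $\bb^{(i)}\comporder\ab^{(k,j)}$ because $\bb^{(i)}$ has the letter $m+1$ in position $i$ while every letter of $\ab^{(k,j)}$ is at most $j\leq m$. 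Hence the $\bb^{(i)}$ split off as $n-1$ isolated points, contributing the summand $\mathbf{1}\uplus\mathbf{1}\uplus\cdots\uplus\mathbf{1}$ ($n-1$ times).

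Next I would handle $A\defs\bigl\{\ab^{(i,j)}\colon i\in[n],\,j\in[m]\bigr\}$. Since $j\geq 1$, the word $\ab^{(i,j)}$ has precisely $i$ nonzero letters, all equal to $j$, so the assignment $(i,j)\mapsto\ab^{(i,j)}$ is a bijection $[n]\times[m]\to A$; I would then prove it is an order isomorphism onto the induced subposet by showing that $\ab^{(i,j)}\comporder\ab^{(i',j')}$ if and only if $i\leq i'$ and $j\leq j'$. The ``if'' direction is a one-line letterwise check. For ``only if'', comparing the first letters forces $j\leq j'$, and if we had $i>i'$ then position $i'{+}1$ would carry the letter $j\geq 1$ in $\ab^{(i,j)}$ but $0$ in $\ab^{(i',j')}$, a contradiction, so $i\leq i'$. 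Thus $A$ induces a copy of $\mathbf{n}\times\mathbf{m}\cong\mathbf{m}\times\mathbf{n}$, and combining this with the previous paragraph gives the stated isomorphism (the case $n=0$ being vacuous, and $m=0$ or $n=1$ being the evident degenerations).

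The whole argument is essentially a bookkeeping exercise; the only place that needs a moment's thought is the ``only if'' direction above, where one has to use $j\geq 1$ to rule out the configurations in which $\ab^{(i,j)}$ and $\ab^{(i',j')}$ would be incomparable. I do not anticipate any real obstacle.
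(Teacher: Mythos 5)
Your proposal is correct and follows essentially the same route as the paper's proof: identify the join-irreducibles via Lemma~\ref{lem:mn_irreducibles}, check that the $\bb^{(i)}$ are pairwise incomparable atoms incomparable with all $\ab^{(k,j)}$, and verify that $(i,j)\mapsto\ab^{(i,j)}$ is an order isomorphism onto a grid. Your letterwise verification of $\ab^{(i,j)}\comporder\ab^{(i',j')}$ if and only if $i\leq i'$ and $j\leq j'$ is just a slightly more explicit rendering of what the paper dispatches ``by inspection.''
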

\begin{proof}
    By inspection, we see that $\{\bb^{(2)},\bb^{(3)},\ldots,\bb^{(n)}\}$ is an antichain of atoms of $\WordPoset(m,n)$, and $\bb^{(k)}\not\comporder\ab^{(i,j)}$ for all $i,j,k$. 

    Moreover, we have $\ab^{(i,j)}\comporder\ab^{(k,j)}$ if and only if $i\leq k$ and $\ab^{(i,j)}\comporder\ab^{(i,k)}$ if and only if $j\leq k$. Thus, the set $\bigl\{\ab^{(i,j)}\colon i\in[n], j\in[m]\bigr\}$ under $\comporder$ is isomorphic to the direct product of an $n$-chain and an $m$-chain.
\end{proof}

\subsection{The Chapoton-style $H$-Triangle}

Let us define 
\begin{displaymath}
    \pos(\wb) \defs \Can(\wb) \cap \bigl\{\ab^{(1,1)},\bb^{(2)},\bb^{(3)},\ldots,\bb^{(n)}\bigr\}
\end{displaymath}
to count the number of atoms of $\WordPoset(m,n)$ contained in the canonical join representation of $\wb$. Then, the \defn{$H$-triangle} of $\WordPoset(m,n)$ is
\begin{displaymath}
    \htriangle_{m,n}(x,y) \defs \sum_{\wb\in\Words(m,n)}x^{\indeg(\wb)}y^{\pos(\wb)}.
\end{displaymath}

\begin{remark}
    Our $H$-triangle is modeled after the prototypical polynomial introduced by F.~Chapoton in the context of cluster algebras and root systems~\cite{chapoton06sur}. 
\end{remark}

\begin{proposition}
    For $m,n\geq 0$ and $0\leq b\leq a\leq n$, the coefficient of $x^{a}y^{b}$ in $\htriangle_{m,n}(x,y)$ is equal to
    \begin{displaymath}
        h_{m,n,a,b} \defs \begin{cases}
            \binom{m}{a-b}\binom{n-b}{a-b}\binom{n-1}{b}, & \text{if}\;b<a-1,\\
            (mn-ma+m-1)\binom{n-1}{a-1}, & \text{if}\;b=a-1,\\
            \binom{n}{a}, & \text{if}\;b=a.
        \end{cases}
    \end{displaymath}
\end{proposition}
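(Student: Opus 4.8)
The plan is to first convert the pair $\bigl(\indeg(\wb),\pos(\wb)\bigr)$ into data about $\tops(\wb)$ and $\Supp(\wb)$, and then to read off the three cases of the claimed formula from Corollary~\ref{cor:refined_enumeration} together with a small correction term. Recall from Lemma~\ref{lem:mn_indegree} that $\indeg(\wb)=\tops(\wb)+\lvert\Supp(\wb)\rvert$. For $\pos$, I would use Proposition~\ref{prop:mn_join_representation}: the atoms contained in $\Can(\wb)$ are exactly the $\tops(\wb)$ words $\bb^{(i)}$ with $w_i=m+1$, together with $\ab^{(1,1)}$ in case $\ab^{(1,1)}\in\Can(\wb)$ (among all $\ab^{(i,j)}$, only $\ab^{(1,1)}$ is an atom of $\WordPoset(m,n)$, cf.\ Lemma~\ref{lem:mn_irreducibles}). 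Hence $\pos(\wb)=\tops(\wb)+\epsilon(\wb)$ with $\epsilon(\wb)\in\{0,1\}$. The key preliminary step is to pin down $\epsilon$: from the description of $\Can(\wb)$, we have $\ab^{(1,1)}\in\Can(\wb)$ if and only if $1\in\Supp(\wb)$ and the \emph{last} occurrence of the letter $1$ in $\wb$ is in position~$1$, which by \eqref{it:mn_2} is equivalent to $w_1=1$ and $w_2,\ldots,w_n\in\{0,m+1\}$. In particular $\epsilon(\wb)=1$ forces $\Supp(\wb)=\{1\}$, and the words with $\epsilon(\wb)=1$ are exactly those of the form $1w_2\ldots w_n$ with $w_i\in\{0,m+1\}$; there are $\binom{n-1}{t}$ of them with $\tops(\wb)=t$. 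Altogether $\indeg(\wb)-\pos(\wb)=\lvert\Supp(\wb)\rvert-\epsilon(\wb)$.

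With this in hand I would split according to the value of $a-b$. If $b<a-1$, i.e.\ $a-b\geq 2$, then $\lvert\Supp(\wb)\rvert-\epsilon(\wb)=a-b$ together with ``$\epsilon(\wb)=1\Rightarrow\lvert\Supp(\wb)\rvert=1$'' forces $\epsilon(\wb)=0$; conversely every $\wb$ with $\indeg(\wb)=a$ and $\lvert\Supp(\wb)\rvert=a-b\geq 2$ has $\epsilon(\wb)=0$ and hence $\pos(\wb)=\tops(\wb)=b$. So $h_{m,n,a,b}$ is precisely the count of Corollary~\ref{cor:refined_enumeration} with support size $a-b$, which simplifies (using $\binom{n-1}{n-b-1}=\binom{n-1}{b}$) to $\binom{m}{a-b}\binom{n-b}{a-b}\binom{n-1}{b}$. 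If $b=a$, we need $\lvert\Supp(\wb)\rvert=\epsilon(\wb)\in\{0,1\}$: either $\Supp(\wb)=\emptyset$, so $\wb$ is a $\{0,m+1\}$-word with $w_1=0$ and $\tops(\wb)=a$, contributing $\binom{n-1}{a}$ (Corollary~\ref{cor:refined_enumeration} with support size $0$), or $\epsilon(\wb)=1$ and $\Supp(\wb)=\{1\}$, contributing $\binom{n-1}{a-1}$ by the count of $\epsilon=1$ words above; Pascal's rule yields $\binom{n}{a}$. If $b=a-1$, the subcase $\epsilon(\wb)=1$ is impossible (it would force $\lvert\Supp(\wb)\rvert=2$), so I would take all $\wb$ with $\indeg(\wb)=a$ and $\lvert\Supp(\wb)\rvert=1$, of which Corollary~\ref{cor:refined_enumeration} counts $m(n-a+1)\binom{n-1}{a-1}$, and subtract the $\binom{n-1}{a-1}$ of them with $\epsilon(\wb)=1$; this gives $\bigl(m(n-a+1)-1\bigr)\binom{n-1}{a-1}=(mn-ma+m-1)\binom{n-1}{a-1}$.

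I expect the main obstacle to be the careful handling of the exceptional atom $\ab^{(1,1)}$. It is tempting to declare $\ab^{(1,1)}\in\Can(\wb)$ whenever $\Supp(\wb)=\{1\}$, whereas one genuinely needs the letter $1$ to occur \emph{only} in position~$1$; for instance $\wb=110$ has $\Supp(\wb)=\{1\}$ but $\ab^{(2,1)}\in\Can(\wb)$ rather than $\ab^{(1,1)}$. Once $\epsilon$ is characterized correctly, the point is that this correction disturbs the ``naive'' count coming from Corollary~\ref{cor:refined_enumeration} only in the two boundary strips $b=a$ and $b=a-1$, where it amounts to adding or subtracting the transparent family of words $1w_2\ldots w_n$ with $w_i\in\{0,m+1\}$. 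Everything else reduces to substitutions into Corollary~\ref{cor:refined_enumeration} and two uses of $\binom{n-1}{a}+\binom{n-1}{a-1}=\binom{n}{a}$.
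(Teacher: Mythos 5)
Your proof is correct and follows essentially the same route as the paper: both reduce the count to Corollary~\ref{cor:refined_enumeration} via $\indeg(\wb)=\tops(\wb)+\lvert\Supp(\wb)\rvert$, identify the words with $\ab^{(1,1)}\in\Can(\wb)$ as exactly those of the form $1\ub$ with $\ub\in\{0,m+1\}^{n-1}$ (there being $\binom{n-1}{a-1}$ of them with $\indeg=a$), and apply the resulting correction only in the strips $b=a-1$ and $b=a$. Your explicit characterization of when $\ab^{(1,1)}\in\Can(\wb)$ is slightly more detailed than the paper's, but the argument is the same.
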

\begin{proof}
    As an auxiliary tool, we consider the following counting function, whose coefficients can be deduced from Corollary~\ref{cor:refined_enumeration}:
    \begin{align*}
        \tilde{\htriangle}_{m,n}(x,y) & \defs \sum_{\wb\in\WordPoset(m,n)}x^{\indeg(\wb)}y^{\tops(\wb)}\\
        & = \sum_{a=0}^{n}\sum_{b=0}^{a}\binom{m}{a-b}\binom{n-b}{a-b}\binom{n-1}{b} x^a y^b.
    \end{align*}
    Let $\wb\in\WordPoset(m,n)$. 
    \begin{itemize}
        \item If $\ab^{(1,1)}\notin\Can(\wb)$, then $\tops(\wb)=\pos(\wb)$. 
        \item If $\ab^{(1,1)}\in\Can(\wb)$, then Lemma~\ref{lem:mn_irreducible_poset} implies that $\ab^{(i,j)}\notin\Can(\wb)$ for $(i,j)\neq(1,1)$ as $\Can(\wb)$ is an antichain and $\ab^{(1,1)}\comporder\ab^{(i,j)}$. Thus, in this situation, Proposition~\ref{prop:mn_join_representation} states that $\wb=1\ub$, where $\ub$ is a word consisting only of the letters $0$ or $m+1$. It follows that $\indeg(\wb)=\pos(\wb)=\tops(\wb)+1$.
    \end{itemize}
    
    As the number of $\wb\in\WordPoset(m,n)$ with $\indeg(\wb)=a$ and $\ab^{(1,1)}\in\Can(\wb)$ is $\binom{n-1}{a-1}$, we obtain the following in combination with Corollary~\ref{cor:refined_enumeration}.
    \begin{itemize}
        \item The number of $\wb\in\Words(m,n)$ with $\indeg(\wb)=a$ and $\pos(\wb)=b<a-1$ equals the number of $\wb\in\Words(m,n)$ with $\indeg(\wb)=a$ and $\tops(\wb)=b$; this is $\binom{m}{a-b}\binom{n-b}{a-b}\binom{n-1}{b}$.
        \item The number of $\wb\in\Words(m,n)$ with $\indeg(\wb)=a$ and $\pos(\wb)=a-1$ equals the number of $\wb\in\Words(m,n)$ with $\indeg(\wb)=a$ and $\tops(\wb)=a-1$ minus the number of $\wb'\in\Words(m,n)$ with $\ab^{(1,1)}\in\Can(\wb')$; this is $\binom{m}{1}\binom{n-a+1}{1}\binom{n-1}{a-1}-\binom{n-1}{a-1}$.
        \item The number of $\wb\in\Words(m,n)$ with $\indeg(\wb)=a$ and $\pos(\wb)=a$ equals the number of $\wb\in\Words(m,n)$ with $\indeg(\wb)=a$ and $\tops(\wb)=a$ plus the number of $\wb'\in\Words(m,n)$ with $\ab^{(1,1)}\in\Can(\wb')$; this is $\binom{m}{0}\binom{n-a}{0}\binom{n-1}{a}+\binom{n-1}{a-1}$.
    \end{itemize}
    Some simplification then yields the claim.
\end{proof}

\subsection{The Galois Graph of $\WordPoset(m,n)$}

It is well known among lattice theorists that a finite lattice $\Poset$ can be represented as a set system ordered by inclusion. In such a representation, the sets corresponding to the lattice elements can be described by a binary relation, connecting the join- and the meet-irreducible elements of $\Poset$. This binary relation is the \defn{poset of irreducibles}~\cite{markowsky75factorization} or---essentially equivalent---the formal context~\cite{wille82restructuring} associated with $\Poset$. 

If $\Poset$ is extremal of length $n$, then the poset of irreducibles can essentially be described by a certain directed graph on the ground set $[n]$, the \defn{Galois graph} of $\Poset$. We refer the reader to \cite{thomas19rowmotion} for more background on and some examples of Galois graphs.  

If, moreover, $\Poset$ is interval constructable, then its Galois graph is the directed graph $\bigl(\JI(\Poset),\to\bigr)$, where the relation $\to$ is characterized as follows.

\begin{lemma}[{\cite[Corollary~A.18(ii)]{muehle21noncrossing}}]\label{lem:galois_characterization}
    Let $\Poset$ be an extremal, interval-constructable lattice. For $j,j'\in\JI(\Poset)$, we have $j\to j'$ if and only if $j\neq j'$ and $j'\leq {j'}^*\vee j$.
\end{lemma}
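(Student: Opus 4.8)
The plan is to unwind the definition of the Galois graph of an extremal lattice and then use interval-constructability to rewrite its defining condition purely in terms of join-irreducibles. First I would recall the standard extremal-lattice labeling: fix a maximum chain $\hat 0 = c_0 \lessdot c_1 \lessdot \cdots \lessdot c_n = \hat 1$ of $\Poset$, whose length equals $n = \ell(\Poset) = \lvert\JI(\Poset)\rvert = \lvert\MI(\Poset)\rvert$ by extremality. Extremality provides bijective labelings: the $i\th$ step carries the join-irreducible $\gamma_i$ with $\gamma_i\leq c_i$ but $\gamma_i\not\leq c_{i-1}$, and the meet-irreducible $\mu_i$ with $\mu_i\geq c_{i-1}$ but $\mu_i\not\geq c_i$, and $i\mapsto\gamma_i$, $i\mapsto\mu_i$ are bijections onto $\JI(\Poset)$ and $\MI(\Poset)$. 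By definition the Galois graph records the off-diagonal non-containments among these, namely $\gamma_a\to\gamma_b$ precisely when $a\neq b$ and $\gamma_a\not\leq\mu_b$ (the diagonal $a=b$ being excluded, matching the clause $j\neq j'$). The task thus becomes translating $\gamma_a\not\leq\mu_b$ into $\gamma_b\leq\gamma_b^{*}\vee\gamma_a$.

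The bridge is the $\kappa$-map. Since $\Poset$ is interval-constructable, it is semidistributive by Theorem~\ref{thm:semidistributive_interval_constructible}; hence every $j\in\JI(\Poset)$ determines the meet-irreducible $\kappa(j)\defs\max\{x\in P\colon j^{*}\leq x,\ j\not\leq x\}$, where $j^{*}$ is the unique element covered by $j$, and $j\mapsto\kappa(j)$ is a bijection of $\JI(\Poset)$ onto $\MI(\Poset)$. I would then prove the purely order-theoretic equivalence: for $j\neq j'$ in $\JI(\Poset)$,
\[
    j\not\leq\kappa(j')\quad\Longleftrightarrow\quad j'\leq {j'}^{*}\vee j .
\]
The forward direction is immediate from maximality of $\kappa(j')$: if $j'\not\leq {j'}^{*}\vee j$, then ${j'}^{*}\vee j$ belongs to the set defining $\kappa(j')$, whence $j\leq {j'}^{*}\vee j\leq\kappa(j')$. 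Conversely, using ${j'}^{*}\leq\kappa(j')$ and $j'\not\leq\kappa(j')$, a containment $j\leq\kappa(j')$ would give ${j'}^{*}\vee j\leq\kappa(j')$, contradicting $j'\leq {j'}^{*}\vee j$.

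The crux of the argument, and the step I expect to be the main obstacle, is reconciling the two bijections $\JI(\Poset)\to\MI(\Poset)$: I must show that the chain-induced labeling agrees with the $\kappa$-map, that is, $\mu_b=\kappa(\gamma_b)$ for every $b$. Granting this, the chain of equivalences $\gamma_a\to\gamma_b\iff\gamma_a\not\leq\mu_b=\kappa(\gamma_b)\iff\gamma_b\leq\gamma_b^{*}\vee\gamma_a$ finishes the proof upon renaming $j=\gamma_a$ and $j'=\gamma_b$. To establish $\mu_b=\kappa(\gamma_b)$ I would check that $\mu_b$ lies in the set defining $\kappa(\gamma_b)$ and is maximal there: from $c_b=c_{b-1}\vee\gamma_b$ together with $\mu_b\geq c_{b-1}$ and $\mu_b\not\geq c_b$ one reads off $\gamma_b\not\leq\mu_b$, while $\gamma_b^{*}\leq\mu_b$ and the maximality both follow from the trim structure. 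Indeed, by Theorem~\ref{thm:semidistributive_extremal_trim} the lattice $\Poset$ is trim, so the maximum chain may be taken left-modular, and left-modularity of the $c_i$ is exactly what forces every $x$ with $\gamma_b^{*}\leq x\not\geq\gamma_b$ to satisfy $x\leq\mu_b$. Carrying out this identification of the combinatorial (chain) and algebraic ($\kappa$) descriptions of the irreducible correspondence is where the real work lies; once it is in place the remaining bookkeeping is routine.
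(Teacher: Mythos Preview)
The paper does not prove this lemma; it is quoted as \cite[Corollary~A.18(ii)]{muehle21noncrossing} and used without argument. There is therefore no in-paper proof to compare your proposal against.

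That said, your sketch is a faithful outline of how the cited result is obtained. The elementary equivalence $j\not\leq\kappa(j')\Longleftrightarrow j'\leq {j'}^{*}\vee j$ is correct as you wrote it, and you have identified the genuine content: that for an extremal semidistributive (hence trim) lattice the chain-induced bijection $\gamma_b\mapsto\mu_b$ coincides with the $\kappa$-map. Your appeal to Theorem~\ref{thm:semidistributive_extremal_trim} to secure a left-modular spine is the right move. The one place to be careful is the sentence ``left-modularity of the $c_i$ is exactly what forces every $x$ with $\gamma_b^{*}\leq x\not\geq\gamma_b$ to satisfy $x\leq\mu_b$'': this is true, but it is not a one-line consequence of the left-modular identity and deserves a short computation (use $c_{b-1}\leq\mu_b$, $c_b\not\leq\mu_b$, and apply left modularity to the pair $x\wedge c_b < x$ with the modular element $c_{b-1}$, or invoke the known description of $\kappa$ along the spine of a trim lattice from \cite{thomas19rowmotion}). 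Once that identification is nailed down, the rest of your argument is routine, exactly as you say.
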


We use Lemma~\ref{lem:galois_characterization} to describe the Galois graph of $\WordPoset(m,n)$. Figure~\ref{fig:23_galois} shows the Galois graph of $\WordPoset(2,3)$.

\begin{theorem}\label{thm:mn_galois}
    Let $m,n\geq 0$. The Galois graph of $\WordPoset(m,n)$ is the directed graph $\Bigl(\JI\bigl(\WordPoset(m,n)\bigr),\to\Bigr)$, where for $\jb,\jb'\in\JI\bigl(\WordPoset(m,n)\bigr)$ with $\jb\neq\jb'$ we have
    \begin{displaymath}
        \jb\to\jb'\quad\text{if and only if}\quad
        \begin{cases}
            \jb=\ab^{(s,t)},\jb'=\ab^{(s',t')}\;\text{and}\;s\geq s',t\geq t',\\
            \jb=\bb^{(s)},\jb'=\ab^{(s',t')}\;\text{and}\;s=s'.
        \end{cases}
    \end{displaymath}
\end{theorem}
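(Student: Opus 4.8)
The plan is to apply Lemma~\ref{lem:galois_characterization} directly: for each pair of distinct join-irreducibles $\jb,\jb'$ we must decide whether $\jb' \comporder {\jb'}^* \vee \jb$, where ${\jb'}^*$ denotes the unique element covered by $\jb'$ (which exists by Lemma~\ref{lem:mn_irreducibles}, since every join-irreducible covers a unique element). The first step is to record ${\jb'}^*$ explicitly in the two cases. By the analysis in the proof of Lemma~\ref{lem:mn_irreducibles}: if $\jb' = \bb^{(s')}$ then ${\jb'}^* = \ob$; and if $\jb' = \ab^{(s',t')}$ then ${\jb'}^*$ is the word agreeing with $\ab^{(s',t')}$ except that its $s'$-th letter is $t'-1$ instead of $t'$. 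Since joins are computed componentwise (by the Lemma preceding Proposition~\ref{prop:mn_join_representation}), the inequality $\jb' \comporder {\jb'}^*\vee\jb$ reduces to checking the single coordinate where $\jb'$ and ${\jb'}^*$ differ; in every other coordinate ${\jb'}^*$ already equals $\jb'$, so the inequality holds there automatically.

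Next I would split into the four cases according to the types of $\jb$ and $\jb'$. \emph{Case $\jb'=\bb^{(s')}$:} here ${\jb'}^*=\ob$, so $\jb'\comporder\jb$ is required; since $\bb^{(s')}$ has a single nonzero letter $m+1$ at position $s'$ and this letter is maximal, no join-irreducible $\jb\neq\bb^{(s')}$ has an $(m{+}1)$ in position $s'$ (the $\ab$'s have entries $\leq m$, and $\bb^{(s'')}$ with $s''\neq s'$ has its $(m{+}1)$ elsewhere), so $\jb\to\bb^{(s')}$ never holds---consistent with the statement, which lists no arrow into a $\bb$. \emph{Case $\jb=\ab^{(s,t)},\jb'=\ab^{(s',t')}$:} the critical coordinate is $s'$, where $\jb'$ has letter $t'$ and ${\jb'}^*$ has letter $t'-1$; we need $(\,{\jb'}^*\vee\jb\,)_{s'}\geq t'$, i.e.\ $\max\{t'-1, a^{(s,t)}_{s'}\}\geq t'$, i.e.\ $a^{(s,t)}_{s'}\geq t'$. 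By definition $a^{(s,t)}_{s'} = t$ if $s'\leq s$ and $0$ otherwise, so this holds exactly when $s'\leq s$ and $t\geq t'$, which is the stated condition. \emph{Case $\jb=\bb^{(s)},\jb'=\ab^{(s',t')}$:} same critical coordinate $s'$, need $b^{(s)}_{s'}\geq t'$; since $b^{(s)}_{s'}=m+1$ if $s'=s$ and $0$ otherwise, and $t'\in[m]$ so $m+1\geq t'$ while $0<t'$, this holds exactly when $s'=s$, matching the statement.

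I would then remark that one should also verify $\jb \neq \jb'$ is genuinely excluded in the first case of the theorem's formula (the condition $s\geq s',\ t\geq t'$ does allow $(s,t)=(s',t')$, so the prose clause ``with $\jb\neq\jb'$'' does the bookkeeping), and note that Lemma~\ref{lem:galois_characterization} applies because $\WordPoset(m,n)$ is extremal (Theorem~\ref{thm:mn_extremal}) and interval-constructable (Theorem~\ref{thm:mn_doubling}). I do not anticipate any serious obstacle: the only point requiring care is the precise description of ${\jb'}^*$ for $\jb'=\ab^{(s',t')}$ (one must double-check, via \eqref{it:mn_2}, that decreasing the $s'$-th letter by one indeed stays in $\Words(m,n)$ and is the unique lower cover), but this is already contained in the proof of Lemma~\ref{lem:mn_irreducibles}. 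The whole argument is a finite case check reducing each covering relation to a one-coordinate comparison of componentwise maxima.
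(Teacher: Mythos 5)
Your proposal is correct and follows essentially the same route as the paper: both invoke Lemma~\ref{lem:galois_characterization} and run the four-case analysis on the types of $\jb$ and $\jb'$, reducing the condition $\jb'\comporder{\jb'}^*\vee\jb$ to the single coordinate where $\jb'$ and its unique lower cover differ. Your explicit identification of ${\jb'}^*$ and the one-coordinate reduction via componentwise joins is exactly the (slightly more terse) argument in the paper, including the observation that arrows into the atoms $\bb^{(s')}$ are impossible.
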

\begin{proof}
    Recall from Section~\ref{sec:extremality} that the join-irreducible elements of $\WordPoset(m,n)$ are of the following form:
    \begin{align*}
        \ab^{(s,t)} & = \underbrace{tt\ldots t}_{s\;\text{times}}00\ldots 0, && 1\leq s\leq n, 1\leq t\leq m\\
        \bb^{(s)} & = 00\ldots 0\underset{\overset{\uparrow}{s\th\;\text{letter}}}{(m\!+\!1)}0\ldots 0, && 2\leq s\leq n.
    \end{align*}

    Let $\jb,\jb'\in\JI\bigl(\WordPoset(m,n)\bigr)$. We thus need to check the condition from Lemma~\ref{lem:galois_characterization} in the following cases:
    
    (i) Suppose that $\jb=\ab^{(s,t)}$, $\jb'=\ab^{(s',t')}$. By definition, the ${s'}\th$ letter of ${\jb'}^{*}$ is strictly smaller than $t'$. Thus, $\ab^{(s,t)}\to\ab^{(s',t')}$ if and only if $s\geq s'$ and $t\geq t'$. 

    (ii) Suppose that $\jb=\ab^{(s,t)}$, $\jb'=\bb^{(s')}$.  In that case, we have ${\jb'}^{*}=\ob$, and thus $\bb^{(s')} = \jb' \leq {\jb'}^{*}\vee\jb = \ob\vee\jb = \jb = \ab^{(s,t)}$. As no letter of $\ab^{(s,t)}$ is equal to $m+1$, we get $\ab^{(s,t)}\not\to\bb^{(s')}$.

    (iii) Suppose that $\jb=\bb^{(s)}$, $\jb'=\ab^{(s',t')}$. By definition, the ${s'}\th$ letter of ${\jb'}^{*}$ is strictly smaller than $t'$. Thus, $\jb'\leq {\jb'}^{*}\vee\jb$ can only be true if the ${s'}\th$ letter of $\jb$ is at least $t'$. However, if $s'=t'=1$, then ${\jb'}^{*}=\ob$, and $\jb'\leq\jb$ will never be true. 
    
    By definition, $s>1$ and the only non-zero letter of $\bb^{(s)}$ is the $s\th$ letter which is equal to $m+1$, which implies that $\bb^{(s)}\to\ab^{(s',t')}$ if and only if $s=s'$.

    (iv) Suppose that $\jb=\bb^{(s)}$, $\jb'=\bb^{(s')}$. In that case, we have ${\jb'}^{*}=\ob$, and thus $\jb'\leq {\jb'}^{*}\vee\jb = \ob\vee\jb = \jb$ if and only if $\jb'=\jb$ as both $\bb^{(s)}$ and $\bb^{(s')}$ are atoms. Thus, we have $\bb^{(s)}\not\to\bb^{(s')}$ for all $s,s'$.
\end{proof}

\begin{remark}
    For $m=1$, Theorem~\ref{thm:mn_galois} is equivalent to \cite[Theorem~1.1]{muehle22hochschild}. 
\end{remark}

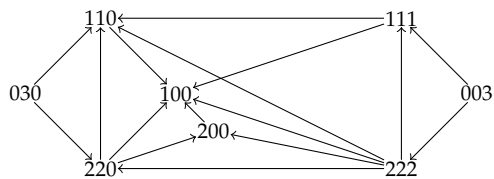
\begin{figure}
    \centering
    \begin{tikzpicture}
        \def\x{1};
        \def\y{1};
        \draw(1*\x,4*\y) node(b2)[wordnode]{$030$};
        \draw(7*\x,4*\y) node(b3)[wordnode]{$003$};
        \draw(3*\x,4*\y) node(a11)[wordnode]{$100$};
        \draw(2*\x,5*\y) node(a21)[wordnode]{$110$};
        \draw(6*\x,5*\y) node(a31)[wordnode]{$111$};
        \draw(3.5*\x,3.5*\y) node(a12)[wordnode]{$200$};
        \draw(2*\x,3*\y) node(a22)[wordnode]{$220$};
        \draw(6*\x,3*\y) node(a32)[wordnode]{$222$};
        \draw[->](b2) -- (a21);
        \draw[->](b2) -- (a22);
        \draw[->](b3) -- (a31);
        \draw[->](b3) -- (a32);
        \draw[->](a32) -- (a31);
        \draw[->](a22) -- (a21);
        \draw[->](a32) -- (a22);
        \draw[->](a32) -- (a11);
        \draw[->](a32) -- (a21);
        \draw[->](a31) -- (a21);
        \draw[->](a31) -- (a11);
        \draw[->](a12) -- (a11);
        \draw[->](a21) -- (a11);
        \draw[->](a22) -- (a11);
        \draw[->](a22) -- (a12);
        \draw[->](a32) -- (a12);
    \end{tikzpicture}
    \caption{The Galois graph of $\WordPoset(2,3)$.}
    \label{fig:23_galois}
\end{figure}

\begin{bibdiv}
\begin{biblist}

\bib{chapoton06sur}{article}{
      author={Chapoton, Fr{\'e}d{\'e}ric},
       title={Sur le nombre de r{\'e}flexions pleines dans les groupes de {C}oxeter finis},
        date={2006},
     journal={Bulletin of the Belgian Mathematical Society},
       pages={585--596},
      volume={13}
}

\bib{chapoton20some}{article}{
      author={Chapoton, Fr{\'e}d{\'e}ric},
       title={Some properties of a new partial order on {D}yck paths},
        date={2020},
     journal={Algebraic Combinatorics},
       pages={433--463},
      volume={3}
}

\bib{combe21geometric}{article}{
      author={Combe, Camille},
       title={A geometric and combinatorial exploration of {H}ochschild lattices},
        date={2021},
     journal={The Electronic Journal of Combinatorics},
      volume={28}
}

\bib{day79characterizations}{article}{
      author={Day, Alan},
       title={Characterizations of finite lattices that are bounded-homomorphic images or sublattices of free lattices},
        date={1979},
     journal={The Canadian Journal of Mathematics},
       pages={69--78},
      volume={31}
}

\bib{day92doubling}{article}{
      author={Day, Alan},
       title={Doubling constructions in lattice theory},
        date={1992},
     journal={The Canadian Journal of Mathematics},
       pages={252--269},
      volume={44}
}

\bib{freese95free}{book}{
      author={Freese, Ralph},
      author={Je{\v z}ek, Jaroslav},
      author={Nation, James B.},
       title={Free Lattices},
        date={1995},
   publisher={The American Mathematical Society}
}

\bib{greene88posets}{article}{
      author={Greene, Curtis},
       title={Posets of shuffles},
        date={1988},
     journal={Journal of Combinatorial Theory, Series A},
       pages={191--206},
      volume={47}
}

\bib{markowsky75factorization}{article}{
      author={Markowsky, George},
       title={The factorization and representation of lattices},
        date={1975},
     journal={Transactions of the American Mathematical Society},
       pages={185--200},
      volume={203}
}

\bib{markowsky92primes}{article}{
      author={Markowsky, George},
       title={Primes, irreducibles and extremal lattices},
        date={1992},
     journal={Order},
       pages={265--290},
      volume={9}
}

\bib{mcconville22bubbleI}{article}{
      author={McConville, Thomas},
      author={M{\"u}hle, Henri},
       title={Bubble lattices {I}: {S}tructure},
        date={2022},
     journal={arXiv:2202.02874}
}

\bib{mcconville22bubbleII}{article}{
      author={McConville, Thomas},
      author={M{\"u}hle, Henri},
       title={Bubble lattices {II}: {C}ombinatorics},
        date={2022},
     journal={arXiv:2208.13683}
}

\bib{muehle21noncrossing}{article}{
      author={M{\"u}hle, Henri},
       title={Noncrossing arc diagrams, {T}amari lattices, and parabolic quotients of the symmetric group},
        date={2021},
     journal={Annals of Combinatorics},
       pages={307--344},
      volume={25}
}

\bib{muehle22hochschild}{article}{
      author={M{\"u}hle, Henri},
       title={Hochschild lattices and shuffle lattices},
        date={2022},
     journal={The European Journal of Combinatorics},
       pages={Article 103521, 31 pages},
      volume={103}
}

\bib{pilaud23hochschild}{article}{
      author={Pilaud, Vincent},
      author={Poliakova, Daria},
       title={Hochschild polytopes},
        date={2023},
     journal={arXiv:2307.05940}
}

\bib{reading11noncrossing}{article}{
      author={Reading, Nathan},
       title={Noncrossing partitions and the shard intersection order},
        date={2011},
     journal={Journal of Algebraic Combinatorics},
       pages={483--530},
      volume={33}
}

\bib{rivera18combinatorial}{article}{
      author={Rivera, Manuel},
      author={Saneblidze, Samson},
       title={A combinatorial model for the free loop fibration},
        date={2018},
     journal={Bulletin of the London Mathematical Society},
       pages={1085--1101},
      volume={50}
}

\bib{saneblidze09bitwisted}{article}{
      author={Saneblidze, Samson},
       title={The bitwisted {C}artesian model for the free loop fibration},
        date={2009},
     journal={Topology and its Applications},
       pages={897--910},
      volume={156}
}

\bib{saneblidze11homology}{article}{
      author={Saneblidze, Samson},
       title={On the homology theory of the closed geodesic problem},
        date={2011},
     journal={Reports of Enlarged Sessions of Seminar of I. Vekua Institute of Applied Mathematics},
       pages={113--116},
      volume={25}
}

\bib{thomas07analogue}{article}{
      author={Thomas, Hugh},
       title={An analogue of distributivity for ungraded lattices},
        date={2007},
     journal={Order},
       pages={249--269},
      volume={23}
}

\bib{thomas19rowmotion}{article}{
      author={Thomas, Hugh},
      author={Williams, Nathan},
       title={Rowmotion in slow motion},
        date={2019},
     journal={Proceedings of the London Mathematical Society},
       pages={1149--1178},
      volume={119}
}

\bib{wille82restructuring}{article}{
      author={Wille, Rudolf},
       title={Restructuring lattice theory: an approach based on hierarchies of concepts},
        date={1982},
     journal={Ordered Sets},
       pages={445--470}
}

\end{biblist}
\end{bibdiv}

\end{document}